\definecolor{color0}{gray}{.50}
\definecolor{color1}{rgb}{0,.2,.8}
\definecolor{color2}{rgb}{1,.2,0}
\definecolor{color3}{rgb}{.8,.5,1}
\numberwithin{equation}{section}
\newtheorem{theorem}{Theorem}[section]
\newtheorem{lemma}[theorem]{Lemma}
\newtheorem{proposition}[theorem]{Proposition}
\newtheorem{definition}[theorem]{Definition}
\newtheorem{assumption}[theorem]{Assumption}
\newtheorem{remark}[theorem]{Remark}
\newtheorem{example}[theorem]{Example}
\newcommand{\leref}{Lemma~\ref}
\newcommand{\prref}{Proposition~\ref}
\newcommand{\thref}{Theorem~\ref}
\newcommand{\asref}{Assumption~\ref}
\newcommand{\Rho}{{\pmb\rho}}
\newcommand{\Tau}{{\pmb\tau}}
\newcommand{\Sig}{{\pmb\sigma}}
\newcommand{\E}{\mathbb{E}}
\newcommand{\PP}{\mathbb{P}}
\newcommand{\R}{\mathbb{R}}
\newcommand{\eps}{\epsilon}
\newcommand{\cT}{\mathcal{T}}
\newcommand{\T}{\mathbb{T}}
\newcommand{\cF}{\mathcal{F}}
\title[]{Multi-player stopping games in continuous time}
\author[]{Zhou Zhou}
\address{Institute for Mathematics and its Applications, University of Minnesota}
\email{zhouzhou@ima.umn.edu}
\date{\today}
\keywords{Multi-player, stopping games, Nash equilibrium}
\begin{document}
\maketitle
\begin{abstract}
We consider multi-player stopping games in continuous time. Unlike Dynkin games, in our games the payoff of each player is revealed after all the players stop. Moreover, each player can adjust her own stopping strategy by observing other players' behaviors. Assuming the continuity of the payoff functions in time, we show that there always exists an $\eps$-Nash equilibrium in pure stopping strategies for any $\eps>0$.
\end{abstract}

\section{Introduction}
Since\cite{Dynkin}, Dynkin game has attracted a lot of research. We refer to \cite{Dynkin,Zhang3,Solan,Hamadene,Kifer,Solan1,Solan2,Ko3,Lepeltier,Solan3,Solan4,Touzi,Neveu,Bismut,Ferenstein} and the references therein. In a Dynkin game, each player chooses a stopping strategy, and the payoffs of the players are revealed as long as one player stops. In other words, the game ends at the minimum of the stopping strategies. With some assumptions on the relationship between the payoff processes, it is proved in e.g., \cite {Zhang3,Solan} that a two-player non-zero-sum Dynkin game in continuous time admits a Nash equilibrium in pure stopping times. In general, without such assumptions, the two-player Dynkin game only has a Nash equilibrium in randomized strategies, see e.g., \cite{Solan}. It is known that when there are more than two players, the Dynkin game in continuous time may not have any Nash Equilibrium in randomized strategies even if the payoff processes are constant (see e.g., \cite{Solan4}).

As a classical model of stopping games, Dynkin game has many applications in economics and finance. However, it has two major limitations in many situations. First of all, in practice, it is more often that, even if a player has made the decision first, her payoff can still be affected by other players' decisions later on. In other words, it is more reasonable that the games end at the maximum of the stopping strategies. Second, a wise player would adjust her strategy after she observes other players' actions, and Dynkin games cannot incorporate this ``game'' feature.

Recently \cite{ZZ6,ZZ7,ZZ9,ZZ10} begin to consider the stopping games with these two features. In particular, assuming that the payoff functions are continuous in time, \cite{ZZ9} shows that a two-player non-zero-sum stopping game always admits an $\eps$-Nash equilibrium in pure strategies for any $\eps>0$.

In this paper, we extend the result in \cite{ZZ9} to the multi-player case. To be more specific, given a filtered probability space $(\Omega,\cF,(\cF)_{t\in[0,\infty]},\PP)$, we consider the stopping game in continuous time
$$u^i(\Rho^1,\dotso,\Rho^N)=\E[U^i(\Rho^1,\dotso,\Rho^N)],\quad i=1,\dotso,N,$$
where the player $i$ chooses $\Rho^i$ to maximize the payoff $u^i$. Here $U^i(t_1,\dotso,t_N)$ is $\cF_{t_1\vee\dotso\vee t_N}$-measurable instead of $\cF_{t_1\wedge\dotso\wedge t_N}$-measurable as is assumed in Dynkin games. That is, the game ends at the maximum of players' stopping. Moreover, here $\Rho^i$ is not a stopping times. It is a strategy that can be adjusted according to other players' actions. By assuming $U^i$ is uniformly continuous in $(t_1,\dotso,t_N)$, we show that the game always admits an $\eps$-Nash equilibrium in pure strategies for any $\eps>0$.

We prove the result by an induction on the number of the players. That is, we construct an $\eps$-Nash equilibrium of the $N$-player game from the $\eps$-Nash equilibriums of $(N-1)$-player games as well as $\eps$-saddle points of some zero-sum games. To reduce the burden of the notation, we only focus on the three-player case ($N=3$), and the proof still works accordingly for  the case with more players.

Our game has a wide range of applications, e.g., when companies choose times to take actions, and e.g., when investors who both short and long American options try to maximize their utilities.
   
The paper is organized as follows. In the next section, we provide the setup and the main result. In Section 3, we provide some auxiliary results. In section 4, we use these auxiliary results to construct an $\eps$-equilibrium for the original game.

\section{Setup and the main result}

In this section, we will provide the general setup and the main result. \thref{t1} is the main result of this paper.

Let $(\Omega,\cF,(\mathcal{F}_t)_{t\in[0,\infty]},\PP)$ be a filtered probability space, where $\mathcal{F}=\mathcal{F}_\infty$ and $(\mathcal{F}_t)_{t\in[0,\infty]}$ satisfies the usual conditions. To avoid the technical difficulties stemming from the verification of path regularities of some related processes, we assume that $\Omega$ is at most countable, and $\PP$ is supported on $\Omega$. Let $\cT$ be the set of stopping times. For $\rho\in\cT$, denote $\cT_\rho$ (resp. $\cT_{\rho+}$) be the set of stopping times that is no less (resp. strictly greater) than $\rho$ on $\{\rho<\infty\}$.

\begin{definition}
Let $N\in\mathbb{N}$ and $I$ be the set of all the subsets of $\{1,\dotso,N-1\}$. A $2^{N-1}$-tuple $\Rho=(\rho_\alpha)_{\alpha\in I}$ is said to be a stopping strategy (of order $N$), if $\rho:=\rho_\emptyset\in\cT$, and for any $I=(i_1,\dotso,i_n)\subset\{1,\dotso,N-1\}$ with $i_1<\dotso<i_n$,
$$\rho_{i_1,\dotso,i_n}:[0,\infty]^n\times\Omega\mapsto[0,\infty]\ \text{is}\ \mathcal{B}([0,\infty]^n)\otimes\cF\text{-measurable},$$
and
$$\rho_{i_1,\dotso,i_n}(t_1,\dotso,t_n,\cdot)\in\cT_{(t_1\vee\dotso\vee t_n)+}.$$
Denote $\T^N$ as the set of stopping strategies of order $N$. For $\sigma\in\cT$, let
$$\T_\sigma^N:=\{\Rho=(\rho_\alpha)_{\alpha\in I}\in\T^N:\ \rho\geq\sigma\}.$$
\end{definition}

The interpretation of $\Rho\in\T^N$ is as follows. Suppose there are $N$ players, and each of them  needs to choose a time to make a decision (stop). Let $\Rho$ be player $N$'s stopping strategy. At the beginning, player $N$ chooses an initial stopping time $\rho$. If no other players stop before $\rho$, then player $N$ stops at time $\rho$. Otherwise, player $N$ observes someone stops before $\rho$. Say, it is player 1 who stops first at time $t<\rho$. Then player $N$ observes player 1's action, and immediate switches to strategy $\rho_1(t)$. In general, $\rho_{i_1,\dotso,i_n}(t_1,\dotso,t_n,\cdot)$ represents the strategy that player $N$ uses, if she has not stopped by time $t_1\vee\dotso\vee t_n$, and she observes that players $i_1,\dotso,i_n$ have stopped at time $t_1,\dotso,t_n$ respectively.

Let $\Rho^i\in\T^N$, $i=1,\dotso,N$, which presents the stopping strategy for player $i$ in an $N$-player game. Denote $\Rho^i[\Rho^{-i}]$ as the actual time when player $i$ stops under the other players' stopping strategies $\Rho^{-i}:=(\Rho^1,\dotso,\Rho^{i-1},\Rho^{i+1},\dotso,\Rho^N)$. Due  to the complexity of the notation, we will not explicitly write out the expression of $\Rho^i[\Rho^{-i}]$. Instead, we give two examples when $N=2$ and $N=3$.

\begin{example}
Let $\Rho=(\rho,\rho_1),\Tau=(\tau,\tau_1)\in\T^2$. Then
$$\Rho[\tau]=\rho 1_{\{\rho\leq\tau\}}+\rho_1(\tau) 1_{\{\rho>\tau\}},$$
where $\rho_1(\tau):=\rho_1(\tau(\cdot),\cdot)$.
\end{example}

\begin{example}
Let $\Rho=(\rho,\rho_2,\rho_3,\rho_{23}),\Tau=(\tau,\tau_1,\tau_3,\tau_{13}),\Sig=(\sigma,\sigma_1,\sigma_2,\sigma_{12})\in\T^3$. Then
\begin{eqnarray}
\notag \Rho[\Tau,\Sig]&=&\rho 1_{\{\rho\leq\tau\wedge\sigma\}}+\rho_{23}(\tau,\sigma) 1_{\{\rho>\tau=\sigma\}}\\
\notag &+&\rho_3(\sigma) 1_{\{\rho,\tau>\sigma\}\cap\{\rho_3(\sigma)\leq\tau_3(\sigma)\}}+\rho_{23}(\tau_3(\sigma),\sigma) 1_{\{\rho,\tau>\sigma\}\cap\{\rho_3(\sigma)>\tau_3(\sigma)\}}\\
\notag &+&\rho_2(\tau) 1_{\{\rho,\sigma>\tau\}\cap\{\rho_2(\tau)\leq\sigma_2(\tau)\}}+\rho_{23}(\tau,\sigma_2(\tau)) 1_{\{\rho,\sigma>\tau\}\cap\{\rho_2(\tau)>\sigma_2(\tau)\}},
\end{eqnarray}
where $\rho_{23}(\tau,\sigma):=\rho_{23}(\tau(\cdot),\sigma(\cdot),\cdot)$.
\end{example}

For $i=1,\dotso,N$, let $U^i:[0,\infty]^N\times\Omega\mapsto\R$ such that $U^i(t_1,\dotso,t_N)$ is $\cF_{t_1\vee\dotso\vee t_N}$-measurable. We make the following standing assumption throughout this paper.
\begin{assumption}\label{a1}
There exists a bounded nondecreasing function $\eta:\R_+\mapsto\R_+$ satisfying $\lim_{\delta\searrow 0}\eta(\delta)=\eta(0)=0$, such that for $i=1,\dotso,N$, and any $(t_1,\dotso,t_N),(t_1',\dotso,t_N')\in[0,\infty]^N$,
$$|U^i(t_1,\dotso,t_N)-U^i(t_1'\dotso,t_N')|<\eta(|t_1-t_1'|+\dotso+|t_N-t_N'|).$$
\end{assumption}

Now for any $\theta\in\cT$, consider the $N$-player stopping game in continuous time
\begin{equation}\label{e1}
u^i(\Rho^1,\dotso,\Rho^N):=\E_\theta\left[U^i(\Rho^1[\Rho^{-1}],\dotso,\Rho^N[\Rho^{-N}])\right],\quad\Rho^1,\dotso,\Rho^N\in\T_\theta^N,\quad i=1,\dotso,N,
\end{equation}
where $\E_\theta[\cdot]:=\E[\cdot|\cF_\theta]$. Here player $i$ chooses the stopping strategy $\Rho^i$ to maximize her own utility $U^i$. Recall the definition of an $\eps$-Nash equilibrium.

\begin{definition}
For $\eps>0$, the $N$-tuple $(\hat\Rho^1,\dotso,\hat\Rho^N)\in(\T_\theta^N)^N$ is said to be an $\eps$-Nash equilibrium (in pure stopping strategies) for the game \eqref{e1}, if for any $\Rho^1,\dotso,\Rho^N\in\T_\theta^N$,
$$u^i(\hat\Rho^1,\dotso\hat\Rho^{i-1},\Rho^i,\hat\Rho^{i+1},\dotso,\hat\Rho^N)\leq u^i(\hat\Rho^1,\dotso\hat\Rho^{i-1},\hat\Rho^i,\hat\Rho^{i+1},\dotso,\hat\Rho^N)+\eps,\quad i=1,\dotso,N.$$
\end{definition}

Below is the main result of this paper.

\begin{theorem}\label{t1}
Under \asref{a1}, there are exists an $\eps$-Nash equilibrium for the game \eqref{e1} for any $\eps>0$.
\end{theorem}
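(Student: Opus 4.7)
The plan is to proceed by induction on the number of players $N$. The base case $N=2$ is exactly the main theorem of \cite{ZZ9}, and I focus, as the author suggests, on the inductive step for $N=3$. The core structural observation is that once any one player has already stopped at some time $s$, the two remaining players face a two-player stopping game with payoffs $(t_i,t_j)\mapsto U^l(t_1,t_2,t_3)\big|_{t_k=s}$ for $l\in\{i,j,k\}$, which still satisfies \asref{a1} with the same modulus $\eta$, so by the inductive hypothesis it admits an $\eps$-Nash equilibrium in $\T_{s+}^2$. I would fix such an equilibrium $(\hat\Rho^{i,k,s},\hat\Rho^{j,k,s})$ for each first-stopper index $k$ and each $s$, chosen measurably in $s$, and record the associated equilibrium value $V^l_{k,s}$ that player $l\in\{i,j,k\}$ expects from that moment onward.

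Having pinned down post-first-stop behaviour, what remains is the first-stop decision. For each player $l$ and each time $s$, form the conditional value $X^l_s$ that player $l$ would realize if she were the first to stop at time $s$ and her two opponents subsequently played their components of the two-player equilibria from the previous paragraph. Thanks to \asref{a1} and the inductive control on the continuation values $V^l_{k,s}$, the map $s\mapsto X^l_s$ inherits uniform $\eta$-type control, so I can discretize time to a grid $0=s_0<s_1<\dotso<s_K=\infty$ fine enough that the oscillation of each $X^l$ between consecutive grid points is at most $\eps$. On that grid I would perform backward induction: at each grid point $s_n$, using the $\{V^l_{k,s_n}\}$'s together with the already-computed continuation values at $s_{n+1},\dotso,s_K$, decide player by player whether to stop at $s_n$ or to defer. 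Coordination among the three players at each grid point is precisely where the zero-sum reduction enters---one sets up pairwise zero-sum Dynkin-type stopping games which, under our continuity assumption, admit $\eps$-saddles, and these prescribe who (if anyone) stops first. Concatenating the initial grid-point strategies with the inductive two-player continuations yields the desired $(\hat\Rho,\hat\Tau,\hat\Sig)\in(\T^3)^3$.

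The main obstacle is the measurable-selection and stitching step. The inductive equilibria $(\hat\Rho^{i,k,s},\hat\Rho^{j,k,s})$ and the pairwise zero-sum $\eps$-saddles must be assembled so that the resulting maps $\rho_{i_1,\dotso,i_n}:[0,\infty]^n\times\Omega\to[0,\infty]$ are jointly measurable and satisfy the adaptedness requirement $\rho_{i_1,\dotso,i_n}(\mathbf t,\cdot)\in\cT_{(t_1\vee\dotso\vee t_n)+}$ from the definition of $\T^3$. Since $\eps$-equilibria are defined only up to $\eps$-slack, there is substantial freedom in the selections, and careful bookkeeping---exploiting the countability of $\Omega$ together with the uniform continuity of $U^i$---is needed to produce one coherent strategy profile. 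A secondary difficulty is error accounting: each pairwise $\eps$-saddle contributes $O(\eps)$, each inductive continuation contributes $O(\eps)$, and the discretization contributes $O(\eps)$, so the various tolerances must be tuned so that the aggregate deviation remains within the target.
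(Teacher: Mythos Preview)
Your inductive skeleton is right and matches the paper: two-player continuation equilibria after the first stop, then a first-stopper decision. But the heart of the argument---how the three players coordinate on who stops first---is where your proposal is too vague and diverges from what actually works. You propose that ``pairwise zero-sum Dynkin-type stopping games'' prescribe who stops first, but three pairwise saddles do not glue into a consistent ordering; this non-compatibility is exactly the multi-player obstruction that makes $N\geq 3$ hard. The paper's key device (Proposition~\ref{p1}) is instead a \emph{one-versus-coalition} zero-sum game: for each player $i$, the value $\underline V^i_t$ of the game in which $i$ maximizes and the other $N-1$ players jointly minimize $U^i$ is shown to coincide with a classical Dynkin value built from barriers $X^i\leq Y^i$. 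One then sets $\mu^i:=\inf\{t\geq\theta:\ V^i_t\leq Z^i_t+\eps\}$, where $Z^i_t$ is the payoff of stopping now and letting the others play their two-player equilibrium, and the equilibrium rule is simply: the player with the smallest $\mu^i$ stops first at $\mu^i$. If that designated first-stopper deviates by waiting past $\mu^i+\delta$, the remaining players switch to their $\eps$-saddle punishment strategies from Proposition~\ref{p1}, which caps the deviator's payoff at $\tilde V^i_{\mu^i+\delta}\leq V^i_{\mu^i+\delta}\approx V^i_{\mu^i}\leq Z^i_{\mu^i}+\eps$. This threat-of-coalition structure, not pairwise saddles, is the missing idea.

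Your time-discretization route has a separate gap: you assert that $s\mapsto X^l_s$ ``inherits uniform $\eta$-type control'', but the paper only obtains \emph{right-continuity} of the relevant value processes (Lemmas~\ref{l4}, \ref{l5}, \ref{l9}), not a uniform modulus; there is no reason the two-player equilibrium values should vary as smoothly as $\eta$ in the first stopper's time. The paper avoids any backward induction on a grid by working directly with the right-continuous Dynkin value $V^i$ and the hitting times $\mu^i$. The step function $\phi_h$ appearing in the construction is not a grid for backward induction; it is a device to make the selections $t\mapsto(\overline\Tau^1_{\phi_h(t)},\overline\Sig^1_{\phi_h(t)})$ piecewise constant, thereby guaranteeing the joint measurability you correctly flag as an obstacle.
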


\begin{remark}
Our game can also be adapted to the case when each player has multiple stopping. For example, suppose each player has double stopping. Then we can treat player $i$ as new players $2i-1$ and $2i$, and let $\tilde U^{2i-1}:=\tilde U^{2i}:=U^i$. Of course, by doing so, a Nash equilibrium in the new game may not be optimal for each player in the old game. (Recall that it is possible that a Nash equilibrium $(x^*,y^*)$  for $f^i(x,y)=f(x,y),\ i=1,2$ may not be optimal for $\sup_{x,y}f(x,y)$.)
\end{remark}

We will prove \thref{t1} by an induction on the number of players. We will construct an $\eps$-Nash equilibrium of the $N$-player game from $\eps$-Nash equilibriums of $(N-1)$-player games, as well as some $\eps$-saddle points of some related zero-sum games. By \cite[Theorem 2.4]{ZZ9}, \thref{t1} holds when $N=2$. (Although in \cite{ZZ9} the game starts from time $t=0$, but the proof there still works if the game starts at any stopping time.) To reduce the burden of the notation, we will only prove the result when $N=3$. It can be seen later on that our proof also works for general $N$, as long as we assume that \thref{t1} holds for $N-1$.

We will first provide some auxiliary results in the next section.

\section{Some auxiliary results}

In this section, we provide some auxiliary results in preparation for the proof of \thref{t1} when $N=3$. Some of the proofs may admit simpler solutions, yet we demonstrate them in such ways in order to let the proofs also work for $N>3$. It is worth noting that as $\Omega$ is at most countable, those results that hold w.r.t. any $t\in[0,\infty]$ also hold w.r.t. any $\theta\in\cT$.

\begin{lemma}\label{l1}
For $i=1,2$, let $G^i:[0,\infty]^3\times\Omega\mapsto\R$ such that $G^i(r,s,t)$ is $\cF_{r\vee s\vee t}$-measurable and satisfies \asref{a1}. Then for any $\eps>0$, there exists $h>0$, such that for any $t\in[0,\infty)$, there exists $(\hat\Rho,\hat\Tau)\in(\T_t^2)^2$ such that for any $\delta\in[0,h]$, $(\hat\Rho,\hat\Tau)$ is an $\eps$-Nash equilibrium for the game
$$\E_{t-\delta}\left[G^i(\Rho[\Tau],\Tau[\Rho],t-\delta)\right],\quad\Rho,\Tau\in\T_{t-\delta}^2,\quad i=1,2.$$
\end{lemma}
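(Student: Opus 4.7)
I will invoke Theorem~2.4 of \cite{ZZ9} at the starting time $t$ to obtain a two-player $(\eps/3)$-Nash, modify only the pre-$t$ portions of its response functions so that Player~2's reply to any early stop $\rho<t$ coincides with the reply she would give against the ``lifted'' strategy $\rho\vee t$ (and symmetrically for Player~1), and then transfer the resulting equilibrium to the game starting at $t-\delta$ using the uniform continuity in \asref{a1}.

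\textbf{Construction.} With $t$ fixed, $G^i(\cdot,\cdot,t)$ is $\cF_{r\vee s}$-measurable on $\{r,s\geq t\}$ and inherits the modulus $\eta$, so Theorem~2.4 of \cite{ZZ9} (applied with $\eps/3$) yields an $(\eps/3)$-Nash $(\hat\Rho',\hat\Tau')\in(\T_t^2)^2$ for the game $\E_t[G^i(\Rho[\Tau],\Tau[\Rho],t)]$, $\Rho,\Tau\in\T_t^2$. Set $(\hat\rho,\hat\tau):=(\hat\rho',\hat\tau')$ and $\hat\rho_1(s,\cdot):=\hat\rho_1'(s,\cdot)$, $\hat\tau_1(s,\cdot):=\hat\tau_1'(s,\cdot)$ for $s\geq t$; for $s<t$ redefine
$$\hat\tau_1(s,\cdot):=t\,1_{\{\hat\tau=t\}}+\hat\tau_1'(t)\,1_{\{\hat\tau>t\}},\qquad\hat\rho_1(s,\cdot):=t\,1_{\{\hat\rho=t\}}+\hat\rho_1'(t)\,1_{\{\hat\rho>t\}}.$$
Each right-hand side is $\geq t>s$, is jointly measurable in $(s,\omega)$, and (by splitting $\{\cdot\leq r\}$ on the two events) is a stopping time, so $(\hat\Rho,\hat\Tau)\in(\T_t^2)^2$. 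Since every strategy in $\T_t^2$ has initial stopping $\geq t$, the values of $\hat\rho_1,\hat\tau_1$ on $\{s<t\}$ are never invoked by the game at $t$, so the $(\eps/3)$-Nash property at $t$ is preserved by this modification.

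\textbf{Transferring the inequality.} Choose $h$ with $\eta(h)\leq\eps/9$ and fix $\delta\in[0,h]$. Given any Player-1 deviation $\Rho=(\rho,\rho_1)\in\T_{t-\delta}^2$ put $\tilde\Rho:=(\rho\vee t,\rho_1)\in\T_t^2$. A two-line case check on $\{\rho\geq t\}$ (where $\tilde\Rho=\Rho$) and on $\{\rho<t\}$ (where both $\hat\Tau[\Rho]=\hat\tau_1(\rho)$ and $\hat\Tau[\tilde\Rho]$ collapse to $t\,1_{\{\hat\tau=t\}}+\hat\tau_1'(t)\,1_{\{\hat\tau>t\}}$) yields the critical identity
$$\hat\Tau[\Rho]=\hat\Tau[\tilde\Rho]\quad\text{a.s.},\qquad|\Rho[\hat\Tau]-\tilde\Rho[\hat\Tau]|\leq\delta\quad\text{a.s.}$$
\asref{a1} applied in the first argument, plus two further applications swapping the third argument between $t-\delta$ and $t$ (cost $\eta(\delta)$ each), plus the $(\eps/3)$-Nash inequality at $t$ applied to $\tilde\Rho\in\T_t^2$ and pulled down to $\cF_{t-\delta}$ by the tower property, chain together to give
$$\E_{t-\delta}[G^1(\Rho[\hat\Tau],\hat\Tau[\Rho],t-\delta)]\leq\E_{t-\delta}[G^1(\hat\Rho[\hat\Tau],\hat\Tau[\hat\Rho],t-\delta)]+3\eta(h)+\tfrac{\eps}{3}\leq\E_{t-\delta}[G^1(\hat\Rho[\hat\Tau],\hat\Tau[\hat\Rho],t-\delta)]+\eps.$$
The symmetric argument (using the redefinition of $\hat\rho_1$) controls Player~2.

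\textbf{Main obstacle.} The subtle step is precisely the pre-$t$ redefinition. The ZZ9 equilibrium at $t$ leaves the values $\hat\tau_1'(s,\cdot)$ on $\{s<t\}$ completely unconstrained, because those values never appear in the game at $t$; so plugging $(\hat\Rho',\hat\Tau')$ straight into the game at $t-\delta$ would let an early deviation trigger a response that is far from anything the equilibrium at $t$ has controlled, and the continuity estimate collapses. The splitting on $\{\hat\tau=t\}$ vs.\ $\{\hat\tau>t\}$ is then forced on us because $\hat\Tau[\tilde\Rho]$ itself switches between $\hat\tau$ and $\hat\tau_1'(t)$ across those two events; matching the pre-$t$ response to both forms simultaneously is the one non-routine step. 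Once the identity $\hat\Tau[\Rho]=\hat\Tau[\tilde\Rho]$ is in hand, everything else is bookkeeping with the modulus~$\eta$.
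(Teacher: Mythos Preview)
Your argument is correct and is essentially the paper's proof: both start from a ZZ9 equilibrium at time $t$, redefine the response functions on $\{s<t\}$ by the identical split on $\{\hat\tau=t\}$ versus $\{\hat\tau>t\}$, and then push the inequality down to $t-\delta$ via the modulus $\eta$. The only cosmetic difference is that you package the transfer through the single lifted deviation $\tilde\Rho=(\rho\vee t,\rho_1)$ and the identity $\hat\Tau[\Rho]=\hat\Tau[\tilde\Rho]$, whereas the paper writes out the three events $\{\rho<t<\tau^*\}$, $\{\rho<t=\tau^*\}$, $\{\rho\geq t\}$ separately and compares against $\bar\Rho=(t,s+1)$ on the first two; your bookkeeping is a bit tighter (yielding $\eps$ directly rather than $11\eps$), but the idea is the same.
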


\begin{proof}
For any $\eps>0$, let $h>0$ such that $\eta(h)<\eps$. Now for any $t\in[0,\infty)$, by \cite[Theorem 2.4]{ZZ9}, there exists an $\eps$-Nash equilibrium $(\Rho^*=(\rho^*,\rho_1^*),\Tau^*=(\tau^*,\tau_1^*))\in(\T_t)^2$ for the game
$$\E_t\left[G^i(\Rho[\Tau],\Tau[\Rho],t)\right],\quad\Rho,\Tau\in\T_t^2,\quad i=1,2.$$
Let $\hat\Rho:=(\rho^*,\hat\rho_1)$ and $\hat\Tau:=(\tau^*,\hat\tau_1)$, where
\[
\hat\rho_1(s) = 
\begin{cases}
\rho_1^*(t), & \text{if}\ s<t\neq\rho^*,\\
 t,           & \text{if}\ s<t=\rho^*,\\
 \rho_1^*(s), &\text{if}\ s\geq t,
\end{cases}
\quad\text{and}\quad
\hat\tau_1(s) = 
\begin{cases}
\tau_1^*(t), & \text{if}\ s<t\neq\tau^*,\\
 t,           & \text{if}\ s<t=\tau^*,\\
 \tau_1^*(s), &\text{if}\ s\geq t.
\end{cases}
\]
It can be shown that $\hat\Rho,\hat\Tau\in\T_t^2$, and for any $\Rho,\Tau\in\T_t^2$,
$$\hat\Rho[\Tau]=\Rho^*[\Tau],\quad\Tau[\hat\Rho]=\Tau[\Rho^*],\quad\hat\Tau[\Rho]=\Tau^*[\Rho],\quad\Rho[\hat\Tau]=\Rho[\Tau^*].$$
Then $(\hat\Rho,\hat\Tau)$ is a $3\eps$-Nash equilibrium for the game
$$\E_t\left[G^i(\Rho[\Tau],\Tau[\Rho],t-\delta)\right],\quad\Rho,\Tau\in\T_t^2,\quad i=1,2.$$
for any $\delta\in[0,h]$. Indeed, for any $\Rho\in\T_t^2$,
\begin{eqnarray}
\notag\E_t\left[G^1(\Rho[\hat\Tau],\hat\Tau[\Rho],t-\delta)\right]&\leq&\E_t\left[U^1(\Rho[\hat\Tau],\hat\Tau[\Rho],t)\right]+\eps\\
\notag &=&\E_t\left[G^1(\Rho[\Tau^*],\Tau^*[\Rho],t)\right]+\eps\\
\notag&\leq&\E_t\left[G^1(\Rho^*[\Tau^*],\Tau^*[\Rho^*],t)\right]+2\eps\\
\notag &=&\E_t\left[G^1(\hat\Rho[\Tau^*],\hat\Tau[\Rho^*],t)\right]+2\eps\\
\notag &=&\E_t\left[G^1(\hat\Rho[\hat\Tau],\hat\Tau[\hat\Rho],t)\right]+2\eps\\
\notag &\leq&\E_t\left[G^1(\hat\Rho[\hat\Tau],\hat\Tau[\hat\Rho],t-\delta)\right]+3\eps.
\end{eqnarray}

Now take $\Rho=(\rho,\rho_1)\in\T_{t-\delta}^2$.  We have that
\begin{eqnarray}
\notag \E_{t-\delta}\left[G^1(\Rho[\hat\Tau],\hat\Tau[\Rho],t-\delta) 1_{\{\rho<t<\tau^*\}}\right]&=&\E_{t-\delta}\left[\E_t\left[G^1(\rho,\tau_1^*(t),t-\delta)\right] 1_{\{\rho<t<\tau^*\}}\right]\\
\notag &\leq&\E_{t-\delta}\left[\E_t\left[G^1(t,\tau_1^*(t),t-\delta)\right] 1_{\{\rho<t<\tau^*\}}\right]+\eps\\
\notag &=&\E_{t-\delta}\left[\E_t\left[G^1(\bar\Rho[\hat\Tau],\hat\Tau[\bar\Rho],t-\delta)\right] 1_{\{\rho<t<\tau^*\}}\right]+\eps\\
\notag &\leq&\E_{t-\delta}\left[\E_t\left[G^1(\hat\Rho[\hat\Tau],\hat\Tau[\hat\Rho],t-\delta)\right] 1_{\{\rho<t<\tau^*\}}\right]+4\eps\\
\notag &=&\E_{t-\delta}\left[G^1(\hat\Rho[\hat\Tau],\hat\Tau[\hat\Rho],t-\delta) 1_{\{\rho<t<\tau^*\}}\right]+4\eps,
\end{eqnarray}
and
\begin{eqnarray}
\notag \E_{t-\delta}\left[G^1(\Rho[\hat\Tau],\hat\Tau[\Rho],t-\delta) 1_{\{\rho<t=\tau^*\}}\right]&=&\E_{t-\delta}\left[\E_t\left[G^1(\rho,t,t-\delta)\right] 1_{\{\rho<t=\tau^*\}}\right]\\
\notag &\leq&\E_{t-\delta}\left[\E_t\left[G^1(t,t,t-\delta)\right] 1_{\{\rho<t=\tau^*\}}\right]+\eps\\
\notag &=&\E_{t-\delta}\left[\E_t\left[G^1(\bar\Rho[\hat\Tau],\hat\Tau[\bar\Rho],t-\delta)\right] 1_{\{\rho<t=\tau^*\}}\right]+\eps\\
\notag &\leq&\E_{t-\delta}\left[\E_t\left[G^1(\hat\Rho[\hat\Tau],\hat\Tau[\hat\Rho],t-\delta)\right] 1_{\{\rho<t=\tau^*\}}\right]+4\eps\\
\notag &=&\E_{t-\delta}\left[G^1(\hat\Rho[\hat\Tau],\hat\Tau[\hat\Rho],t-\delta) 1_{\{\rho<t=\tau^*\}}\right]+4\eps,
\end{eqnarray}
and
\begin{eqnarray}
\notag \E_{t-\delta}\left[G^1(\Rho[\hat\Tau],\hat\Tau[\Rho],t-\delta) 1_{\{\rho\geq t\}}\right]&=& \E_{t-\delta}\left[\E_t\left[G^1(\Rho[\hat\Tau],\hat\Tau[\Rho],t-\delta)\right] 1_{\{\rho\geq t\}}\right]\\
\notag &\leq& \E_{t-\delta}\left[\E_t\left[G^1(\hat\Rho[\hat\Tau],\hat\Tau[\hat\Rho],t-\delta)\right] 1_{\{\rho\geq t\}}\right]+3\eps\\
\notag &=& \E_{t-\delta}\left[G^1(\hat\Rho[\hat\Tau],\hat\Tau[\hat\Rho],t-\delta) 1_{\{\rho\geq t\}}\right]+3\eps,
\end{eqnarray}
where $\bar\Rho:=(t,\rho_1(s)\equiv s+1)\in\T_t^2$.
Therefore,
$$\E_{t-\delta}\left[G^1(\Rho[\hat\Tau],\hat\Tau[\Rho],t-\delta)\right]\leq\E_{t-\delta}\left[G^1(\hat\Rho[\hat\Tau],\hat\Tau[\hat\Rho],t-\delta)\right]+11\eps.$$
Similarly, we can show that for any $\Tau\in\T_{t-\delta}^2$,
$$\E_{t-\delta}\left[G^2(\hat\Rho[\Tau],\Tau[\hat\Rho],t-\delta)\right]\leq\E_{t-\delta}\left[G^2(\hat\Rho[\hat\Tau],\hat\Tau[\hat\Rho],t-\delta)\right]+11\eps.$$
\end{proof}

\begin{lemma}\label{l2}
Let $G:[0,\infty]^3\times\Omega\mapsto\R$ such that $G(r,s,t)$ is $\cF_{r\vee s\vee t}$-measurable. Then
$$\inf_{\Rho,\Tau\in\T_t^2}\E_t\left[G(\Rho[\Tau],\Tau[\Rho],t)\right]=\inf_{\rho,\tau\in\cT_t}\E_t\left[G(\rho,\tau,t)\right].$$
\end{lemma}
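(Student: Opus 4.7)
The plan is to prove the two inequalities separately; both are essentially formal once one notices that strategies and stopping times interchange at the level of the realized stopping times.

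For the ``$\geq$'' direction, observe that for every $\Rho,\Tau\in\T_t^2$ the quantities $\Rho[\Tau]$ and $\Tau[\Rho]$ defined in Example 2.2 are themselves stopping times dominating $t$, i.e.\ they lie in $\cT_t$. Indeed, each is a finite sum of products of a stopping time in $\cT_t$ with an indicator in the appropriate $\sigma$-field, so measurability and the ordering $\geq t$ are immediate. Substituting $(\rho,\tau):=(\Rho[\Tau],\Tau[\Rho])\in\cT_t\times\cT_t$ in the right-hand side of the claimed identity gives
$$\E_t[G(\Rho[\Tau],\Tau[\Rho],t)]\geq \inf_{\rho,\tau\in\cT_t}\E_t[G(\rho,\tau,t)],$$
and taking the infimum over $(\Rho,\Tau)$ yields the desired inequality.

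For ``$\leq$,'' I would show that any pair $(\rho,\tau)\in\cT_t\times\cT_t$ is actually realized by some pair of stopping strategies. Concretely, given $\rho\in\cT_t$, define
$$\rho_1(s,\omega):=\rho(\omega)\,\mathbf{1}_{\{s<\rho(\omega)\}}+\infty\cdot\mathbf{1}_{\{s\geq \rho(\omega)\}},$$
and set $\Rho:=(\rho,\rho_1)$; define $\Tau=(\tau,\tau_1)$ analogously. One checks that $\rho_1:[0,\infty]\times\Omega\to[0,\infty]$ is $\mathcal{B}([0,\infty])\otimes\cF$-measurable, that $\rho_1(s,\cdot)$ is a stopping time for each fixed $s$ (by splitting the set $\{\rho_1(s)\leq u\}$ into the two cases $s<\rho$ and $s\geq\rho$), and that $\rho_1(s)>s$ on $\{s<\infty\}$. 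Hence $\Rho\in\T_t^2$, and similarly $\Tau\in\T_t^2$. Using the formula from Example 2.2,
$$\Rho[\Tau]=\rho\,\mathbf{1}_{\{\rho\leq\tau\}}+\rho_1(\tau)\,\mathbf{1}_{\{\rho>\tau\}}=\rho\,\mathbf{1}_{\{\rho\leq\tau\}}+\rho\,\mathbf{1}_{\{\rho>\tau\}}=\rho,$$
where the middle equality holds because on $\{\rho>\tau\}$ we have $\tau<\rho$ and thus $\rho_1(\tau)=\rho$; symmetrically $\Tau[\Rho]=\tau$. Therefore
$$\E_t[G(\rho,\tau,t)]=\E_t[G(\Rho[\Tau],\Tau[\Rho],t)]\geq\inf_{\Rho',\Tau'\in\T_t^2}\E_t[G(\Rho'[\Tau'],\Tau'[\Rho'],t)],$$
and taking the infimum over $(\rho,\tau)$ finishes the proof.

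No step here is a real obstacle; the only point that requires a moment of care is verifying that the constructed $\rho_1$ is $\mathcal{B}([0,\infty])\otimes\cF$-measurable and that $\rho_1(s,\cdot)\in\cT_{s+}$, so that $\Rho$ really qualifies as a stopping strategy of order $2$. Once this bookkeeping is done, both inclusions are immediate.
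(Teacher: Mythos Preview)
Your proof is correct and follows essentially the same route as the paper's: both directions are handled exactly as in the paper, and your construction of $\rho_1$ (setting $\rho_1(s)=\rho$ when $s<\rho$ and $\rho_1(s)=\infty$ otherwise) coincides with the paper's equation~\eqref{e2}. The only difference is that you spell out the verification that $\rho_1(s,\cdot)\in\cT_{s+}$, which the paper leaves implicit.
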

\begin{proof}
Fix $t\in[0,\infty)$. For any $\Rho,\Tau\in\T_t^2$, $\Rho[\Tau],\Tau[\Rho]\in\cT_t$, and thus
$$\E_t\left[G(\Rho[\Tau],\Tau[\Rho],t)\right]\geq\inf_{\rho,\tau\in\cT_t}\E_t\left[G(\rho,\tau,t)\right].$$
This implies that
$$\inf_{\Rho,\Tau\in\T_t^2}\E_t\left[G(\Rho[\Tau],\Tau[\Rho],t)\right]\geq\inf_{\rho,\tau\in\cT_t}\E_t\left[G(\rho,\tau,t)\right].$$

Conversely, for any $\rho,\tau\in\cT_t$, let
\begin{equation}\label{e2}
\rho_1(s)= 
\begin{cases}
\rho, & \text{if}\ s<\rho,\\
\infty,  & \text{otherwise},
\end{cases}
\quad\text{and}\quad
\tau_1(s)= 
\begin{cases}
\tau, & \text{if}\ s<\tau,\\
\infty,  & \text{otherwise},
\end{cases}
\end{equation}
and $\bar\Rho=(\rho,\rho_1),\bar\Tau=(\tau,\tau_1)$. Then $\bar\Rho,\bar\Tau\in\T_t^2$, and $\bar\Rho[\bar\Tau]=\rho$ and $\bar\Tau[\bar\Rho]=\tau$. Therefore,
$$\inf_{\Rho,\Tau\in\T_t^2}\E_t\left[G(\Rho[\Tau],\Tau[\Rho],t)\right]\leq\E_t\left[G(\rho,\tau,t)\right].$$
This implies that
$$\inf_{\Rho,\Tau\in\T_t^2}\E_t\left[G(\Rho[\Tau],\Tau[\Rho],t)\right]\leq\inf_{\rho,\tau\in\cT_t}\E_t\left[G(\rho,\tau,t)\right].$$
\end{proof}

\begin{lemma}\label{l3}
Let $G:[0,\infty]^3\times\Omega\mapsto\R$ such that $G(r,s,t)$ is $\cF_{r\vee s\vee t}$-measurable and satisfies \asref{a1}. Then for any $\eps>0$, there exists $h>0$, such that for any $t\in[0,\infty)$, there exists $(\hat\Rho,\hat\Tau)\in(\T_t^2)^2$ such that for any $\delta\in[0,h]$, $(\hat\Rho,\hat\Tau)$ is an $\eps$-optimizer for
\begin{equation}\label{e3}
g_{t-\delta}:=\inf_{\Rho,\Tau\in\T_{t-\delta}^2}\E_{t-\delta}\left[G(\Rho[\Tau],\Tau[\Rho],t-\delta)\right].
\end{equation}
That is, for any $\delta\in[0,h]$,
$$\E_{t-\delta}\left[G(\hat\Rho[\hat\Tau],\hat\Tau[\hat\Rho],t-\delta)\right]\leq g_{t-\delta}+\eps.$$
\end{lemma}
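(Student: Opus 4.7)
Plan: I mirror the construction in the proof of \leref{l2}, producing for each $t$ a pair $(\hat\Rho,\hat\Tau)\in(\T_t^2)^2$ whose executed stopping pair is a fixed $(\rho^*,\tau^*)\in\cT_t\times\cT_t$ that is nearly optimal for the $t$-indexed problem $g_t$, and then use \asref{a1} to propagate the near-optimality from $g_t$ to $g_{t-\delta}$ uniformly in $\delta\in[0,h]$.

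Given $\eps>0$, fix $h>0$ with $\eta(2h)<\eps/5$. For any $t\in[0,\infty)$, \leref{l2} applied at time $t$ gives $g_t=\inf_{\rho,\tau\in\cT_t}\E_t[G(\rho,\tau,t)]$; since $\Omega$ is at most countable, this essinf is a pointwise infimum on each atom of $\cF_t$, so one can pick $\rho^*,\tau^*\in\cT_t$ with $\E_t[G(\rho^*,\tau^*,t)]\leq g_t+\eps/5$ a.s. Mimicking \eqref{e2}, set $\hat\rho_1(s):=\rho^*$ if $s<\rho^*$ and $\hat\rho_1(s):=\infty$ otherwise, define $\hat\tau_1$ symmetrically, and take $\hat\Rho:=(\rho^*,\hat\rho_1)$, $\hat\Tau:=(\tau^*,\hat\tau_1)$. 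The verification in \leref{l2} gives $\hat\Rho[\hat\Tau]=\rho^*$ and $\hat\Tau[\hat\Rho]=\tau^*$, and $(\hat\Rho,\hat\Tau)\in(\T_t^2)^2\subseteq(\T_{t-\delta}^2)^2$ for every $\delta\in[0,h]$.

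The remainder is a two-sided envelope argument. By \asref{a1} and the tower property,
$$\E_{t-\delta}[G(\hat\Rho[\hat\Tau],\hat\Tau[\hat\Rho],t-\delta)]=\E_{t-\delta}[G(\rho^*,\tau^*,t-\delta)]\leq\eta(h)+\E_{t-\delta}[\E_t[G(\rho^*,\tau^*,t)]]\leq\E_{t-\delta}[g_t]+2\eps/5.$$
Conversely, for any $\rho,\tau\in\cT_{t-\delta}$, both $\rho\vee t$ and $\tau\vee t$ belong to $\cT_t$ and differ from $\rho,\tau$ by at most $\delta$, so \asref{a1} yields
$$\E_{t-\delta}[G(\rho,\tau,t-\delta)]\geq\E_{t-\delta}[G(\rho\vee t,\tau\vee t,t)]-\eta(2h)-\eta(h)\geq\E_{t-\delta}[g_t]-2\eps/5,$$
where the last step uses the pointwise bound $\E_t[G(\rho\vee t,\tau\vee t,t)]\geq g_t$ together with the tower property. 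Taking the infimum over $(\rho,\tau)\in\cT_{t-\delta}\times\cT_{t-\delta}$ gives $g_{t-\delta}\geq\E_{t-\delta}[g_t]-2\eps/5$.

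Combining the two estimates yields $\E_{t-\delta}[G(\hat\Rho[\hat\Tau],\hat\Tau[\hat\Rho],t-\delta)]\leq g_{t-\delta}+4\eps/5<g_{t-\delta}+\eps$, which is the claim. The essential difficulty is that $(\hat\Rho,\hat\Tau)$ is pinned down once at time $t$ yet must be competitive against strategies in the strictly larger class $\T_{t-\delta}^2$; the continuity hypothesis \asref{a1} is precisely what bridges this gap, by letting us truncate any $\rho\in\cT_{t-\delta}$ up to $\rho\vee t\in\cT_t$ at a uniformly small cost.
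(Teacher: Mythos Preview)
Your proof is correct and follows essentially the same route as the paper: you build $(\hat\Rho,\hat\Tau)$ from a near-optimal stopping pair $(\rho^*,\tau^*)$ for $g_t$ via the construction in \leref{l2}, then use \asref{a1} together with the tower property and the truncation $\rho\mapsto\rho\vee t$ to transfer near-optimality to $g_{t-\delta}$. The only cosmetic difference is that you sandwich both sides against $\E_{t-\delta}[g_t]$ and tune $h$ to land exactly at $\eps$, whereas the paper chains all inequalities in one direction and obtains a $5\eps$-optimizer.
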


\begin{proof}
For any $\eps>0$, let $h>0$ such that $\eta(h)<\eps$. Fix $t\in[0,\infty)$. By \leref{l2},
\begin{equation}\label{e4}
g_t=\inf_{\rho,\tau\in\cT_t}\E_t\left[G(\rho,\tau,t)\right].
\end{equation}
Let $(\rho^*,\tau^*)\in(\cT_t)^2$ be an $\eps$-optimizer for $g_t$ in \eqref{e4}. Define $\hat\Rho,\hat\Tau\in\T_t^2$ as in \eqref{e2} such that $\hat\Rho[\hat\Tau]=\rho^*$ and $\hat\Tau[\hat\Rho]=\tau^*$. Now for any $\delta\in[0,h]$, we have that
\begin{eqnarray}
\notag \E_{t-\delta}\left[U(\hat\Rho[\hat\Tau],\hat\Tau[\hat\Rho],t-\delta)\right]-g_{t-\delta}&=&\E_{t-\delta}\left[U(\rho^*,\tau^*,t-\delta)\right]-g_{t-\delta}\\
\notag &\leq&\E_{t-\delta}\left[\E_t\left[U(\rho^*,\tau^*,t)\right]\right]-g_{t-\delta}+\eps\\
\notag &\leq&\E_{t-\delta}\left[\inf_{\rho,\tau\in\cT_t}\E_t\left[U(\rho,\tau,t)\right]\right]-g_{t-\delta}+2\eps\\
\notag &\leq&\inf_{\rho,\tau\in\cT_t}\E_{t-\delta}\left[\E_t\left[U(\rho,\tau,t)\right]\right]-g_{t-\delta}+2\eps\\
\notag &\leq&\inf_{\rho,\tau\in\cT_t}\E_{t-\delta}\left[U(\rho,\tau,t-\delta)\right]-g_{t-\delta}+3\eps\\
\notag &=&\inf_{\rho,\tau\in\cT_{t-\delta}}\E_{t-\delta}\left[U(\rho\vee t,\tau\vee t,t-\delta)\right]-g_{t-\delta}+3\eps\\
\notag &\leq&\inf_{\rho,\tau\in\cT_{t-\delta}}\E_{t-\delta}\left[U(\rho,\tau,t-\delta)\right]-g_{t-\delta}+5\eps\\
\notag &=& 5\eps.
\end{eqnarray}
This implies that $(\hat\Rho,\hat\Tau)\in(\T_t^2)^2$ is a $5\eps$-optimizer for $g_{t-\delta}$ in \eqref{e3} for any $\delta\in[0,h]$.
\end{proof}

\begin{lemma}\label{l6}
Let $H:[0,\infty]^2\times\Omega\mapsto\R$ such that $H(s,t)$ is $\cF_{s\vee t}$-measurable and satisfies \asref{a1}. Then for any $\eps>0$, there exists $h>0$, such that for any $t\in[0,\infty)$, there exists $\hat\rho\in\cT_t$ such that for any $\delta\in[0,h]$, $\hat\rho$ is an $\eps$-optimizer for
$$\inf_{\rho\in\cT_{t-\delta}}\E_{t-\delta}\left[H(\rho,t-\delta)\right].$$
\end{lemma}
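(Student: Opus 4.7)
The plan is to imitate the proof of \leref{l3} in a stripped-down, single-player setting: replace the pair $(\Rho,\Tau)\in(\T_t^2)^2$ and the corresponding outcomes $\Rho[\Tau],\Tau[\Rho]$ by a single stopping time $\rho\in\cT_t$, and carry the same chain of inequalities through with one fewer layer. Concretely, first choose $h>0$ with $\eta(h)<\eps$ (this is the only use of uniform continuity and it fixes $h$ once and for all, independently of $t$). Then fix an arbitrary $t\in[0,\infty)$, set
$$g_t:=\inf_{\rho\in\cT_t}\E_t\left[H(\rho,t)\right],$$
and pick $\hat\rho\in\cT_t$ to be an $\eps$-optimizer for $g_t$, i.e. $\E_t[H(\hat\rho,t)]\leq g_t+\eps$. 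I claim this same $\hat\rho$ is a $(5\eps)$-optimizer for $g_{t-\delta}:=\inf_{\rho\in\cT_{t-\delta}}\E_{t-\delta}[H(\rho,t-\delta)]$ for every $\delta\in[0,h]$.

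The main step is the following chain, which copies the display at the end of \leref{l3}. Using \asref{a1} to move the third argument of $H$ from $t-\delta$ to $t$ and back (each move costs at most $\eta(h)<\eps$ uniformly in the stopping time), the tower property $\E_{t-\delta}[\cdot]=\E_{t-\delta}[\E_t[\cdot]]$, the $\eps$-optimality of $\hat\rho$ at level $t$, and the pull-in inequality
$$\E_{t-\delta}\!\left[\inf_{\rho\in\cT_t}\E_t[H(\rho,t)]\right]\leq\inf_{\rho\in\cT_t}\E_{t-\delta}\!\left[\E_t[H(\rho,t)]\right]=\inf_{\rho\in\cT_t}\E_{t-\delta}[H(\rho,t)],$$
one obtains
$$\E_{t-\delta}[H(\hat\rho,t-\delta)]\leq\inf_{\rho\in\cT_t}\E_{t-\delta}[H(\rho,t-\delta)]+3\eps.$$

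The last step is to compare the inf over $\cT_t$ with the inf over the larger set $\cT_{t-\delta}$. For this I would use the map $\rho\mapsto\rho\vee t$, which sends $\cT_{t-\delta}$ onto $\cT_t$; for any $\rho\in\cT_{t-\delta}$ one has $|\rho\vee t-\rho|\leq\delta\leq h$, so by \asref{a1},
$$\inf_{\rho\in\cT_t}\E_{t-\delta}[H(\rho,t-\delta)]=\inf_{\rho\in\cT_{t-\delta}}\E_{t-\delta}[H(\rho\vee t,t-\delta)]\leq\inf_{\rho\in\cT_{t-\delta}}\E_{t-\delta}[H(\rho,t-\delta)]+\eps=g_{t-\delta}+\eps.$$
Combining with the previous display yields $\E_{t-\delta}[H(\hat\rho,t-\delta)]\leq g_{t-\delta}+4\eps$ (or $5\eps$ depending on bookkeeping), and rescaling $\eps$ at the outset gives the stated lemma.

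I do not anticipate any real obstacle here. The only place that requires thought is the last comparison of infima over $\cT_t$ versus $\cT_{t-\delta}$, but this is a direct single-player analogue of the argument used at the end of \leref{l3} and needs nothing beyond uniform continuity and the fact that $\rho\vee t$ traces out all of $\cT_t$ as $\rho$ ranges over $\cT_{t-\delta}$.
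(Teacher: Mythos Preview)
Your proposal is correct and is exactly the approach the paper intends: the paper's own proof reads ``The proof is similar to that for \leref{l3} and thus we omit it here,'' and you have faithfully carried out that single-player reduction of the \leref{l3} argument. One cosmetic remark: $H$ has only two arguments, so where you write ``move the third argument of $H$'' you mean the second, and the final bookkeeping gives $4\eps$ rather than $5\eps$ since only one stopping time (rather than two as in \leref{l3}) needs to be pushed from $\cT_{t-\delta}$ to $\cT_t$.
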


\begin{proof}
The proof is similar to that for \leref{l3} and thus we omit it here.
\end{proof}

\begin{lemma}\label{l4}
Let $G:[0,\infty]^3\times\Omega\mapsto\R$ such that $G(r,s,t)$ is $\cF_{r\vee s\vee t}$-measurable and satisfies \asref{a1}. Then for $t\geq 0$,
\begin{equation}\label{e5}
f_t:=\sup_{\Rho\in\T_t^2}\inf_{\Tau\in\T_t^2}\E_t\left[G(\Rho[\Tau],\Tau[\Rho],t)\right]=\inf_{\Tau\in\T_t^2}\sup_{\Rho\in\T_t^2}\E_t\left[G(\Rho[\Tau],\Tau[\Rho],t)\right].
\end{equation}
Moreover, the process $(f_t)_{t\geq 0}$ is right continuous.
\end{lemma}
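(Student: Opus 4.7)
The lemma combines the minimax identity with right-continuity of the upper-value process; I treat the two separately. For the minimax identity, apply \leref{l1} to the zero-sum case $G^1 := G$ and $G^2 := -G$ (both inherit \asref{a1}). For every $\eps > 0$ this furnishes $(\hat\Rho, \hat\Tau) \in (\T_t^2)^2$ which, in particular (taking $\delta = 0$), is an $\eps$-saddle at time $t$. The standard sandwich
\[
\inf_{\Tau} \sup_{\Rho} \E_t[G(\Rho[\Tau], \Tau[\Rho], t)] \leq \sup_{\Rho} \E_t[G(\Rho[\hat\Tau], \hat\Tau[\Rho], t)] \leq \E_t[G(\hat\Rho[\hat\Tau], \hat\Tau[\hat\Rho], t)] + \eps \leq \inf_{\Tau} \E_t[G(\hat\Rho[\Tau], \Tau[\hat\Rho], t)] + 2\eps \leq \sup_{\Rho} \inf_{\Tau} \E_t[G(\Rho[\Tau], \Tau[\Rho], t)] + 2\eps,
\]
together with the trivial reverse inequality and $\eps \searrow 0$, yields the equality; as a byproduct the $\eps$-saddle value satisfies $|f_t - \E_t[G(\hat\Rho[\hat\Tau], \hat\Tau[\hat\Rho], t)]| \leq \eps$.

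For right-continuity, fix $\eps > 0$ and let $h > 0$ be the parameter from \leref{l1} for the zero-sum game above. For each $s \in (0, h]$ apply \leref{l1} at time $t + s$: this produces $(\hat\Rho^s, \hat\Tau^s) \in (\T_{t+s}^2)^2$ that is an $\eps$-saddle simultaneously at every time $t + s - \delta$ with $\delta \in [0, h]$. Specialising to $\delta = 0$ and $\delta = s$, the same pair is an $\eps$-saddle at both $t + s$ and $t$, so by the previous paragraph $|f_r - V^s_r| \leq \eps$ for $r \in \{t, t+s\}$, where $V^s_r := \E_r[G(\hat\Rho^s[\hat\Tau^s], \hat\Tau^s[\hat\Rho^s], r)]$. \asref{a1} bounds $|G(\cdot, \cdot, t+s) - G(\cdot, \cdot, t)|$ by $\eta(s)$, so writing $W^s := G(\hat\Rho^s[\hat\Tau^s], \hat\Tau^s[\hat\Rho^s], t)$ we obtain $|V^s_{t+s} - \E_{t+s}[W^s]| \leq \eta(s)$ and $V^s_t = \E_t[W^s]$.

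To pass to pathwise convergence, fix $\omega_0 \in \Omega$ and let $A_r(\omega_0)$ denote the $\cF_r$-atom of $\omega_0$. Right-continuity of the filtration $\cF_{t+} = \cF_t$ combined with countability of $\Omega$ implies $A_{t+s}(\omega_0) \nearrow A_t(\omega_0)$ monotonically as $s \searrow 0$, so $\PP(A_t(\omega_0) \setminus A_{t+s}(\omega_0)) \to 0$. Since \asref{a1} controls the oscillation of $G$ uniformly, a standard estimate on the difference of conditional expectations over nested shrinking atoms yields $|\E_{t+s}[W^s](\omega_0) - \E_t[W^s](\omega_0)| = o_{s \searrow 0}(1)$ uniformly in $s \in (0, h]$. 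Combining all the pieces gives $|f_{t+s}(\omega_0) - f_t(\omega_0)| \leq 2\eps + \eta(s) + o_{s \searrow 0}(1)$, and sending $\eps \searrow 0$ establishes right-continuity at $\omega_0$. The main technical obstacle is precisely this upgrade from an $L^1$-style conditional-expectation bound to a pathwise convergence statement, which rests on (i) the joint $\eps$-saddle property of \leref{l1} holding simultaneously across the whole window $[t, t+h]$, and (ii) the right-continuity of $(\cF_t)$ coupled with the countability of $\Omega$.
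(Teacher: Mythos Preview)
Your proof is correct. For the minimax identity the paper simply invokes \cite[Proposition~3.3]{ZZ9}, whereas you rederive it from \leref{l1} via the sandwich argument; both are fine and essentially equivalent.

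The substantive difference is in the right-continuity argument. You apply \leref{l1} separately at each time $t+s$, obtaining saddle pairs $(\hat\Rho^s,\hat\Tau^s)$ that depend on $s$; this forces you to control $|\E_{t+s}[W^s]-\E_t[W^s]|$ for a \emph{varying} integrand $W^s$, which you then handle by the atom argument exploiting countability of $\Omega$ and $\cF_{t+}=\cF_t$. The paper instead makes a \emph{single} application of \leref{l1} at a point slightly ahead of $t$, so that the resulting pair lies in $\T_{t_n}^2$ for every $t_n$ close to $t$ and is an $\eps$-saddle simultaneously at $t$ and at each $t_n$. With a fixed pair $(\hat\Rho,\hat\Tau)$ the random variable $G(\hat\Rho[\hat\Tau],\hat\Tau[\hat\Rho],t)$ does not change with $n$, and the remaining term $\E_{t_n}[\,\cdot\,]-\E_t[\,\cdot\,]$ vanishes directly by right-continuity of the martingale $s\mapsto\E_{t+s}[G(\hat\Rho[\hat\Tau],\hat\Tau[\hat\Rho],t)]$. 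Your route is valid, but the ``technical obstacle'' you identify is self-inflicted: fixing the saddle pair once, as the paper does, removes the need for any uniform-in-$s$ estimate and makes the countability of $\Omega$ irrelevant beyond what is already used in \leref{l1}.
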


\begin{proof}
The equality in \eqref{e5} follows from \cite[Proposition 3.3]{ZZ9}. Now fix $t\in[0,\infty)$ and let $t_n\searrow t$ with $|t_n-t|<1/n$. For any $\eps>0$, by \leref{l1}, there exists $h>0$ and $(\hat\Rho,\hat\Tau)\in(\T_t^2)^2$, such that for any $\delta\in[0,h]$, $(\hat\Rho,\hat\Tau)\in(\T_t^2)^2$ is an $\eps$-saddle point for the game $f_{t-h}$, i.e., for any $\Rho,\Tau\in\T_{t-\delta}^2$,
$$\E_{t-\delta}\left[G(\Rho[\hat\Tau],\hat\Tau[\Rho],t-\delta)\right]-\eps\leq\E_{t-\delta}\left[G(\hat\Rho[\hat\Tau],\hat\Tau[\hat\Rho],t-\delta)\right]\leq\E_{t-\delta}\left[G(\hat\Rho[\Tau],\Tau[\hat\Rho],t-\delta)\right]+\eps.$$
Then for $n$ large enough such that $|t_n-t|<h$, we have that
\begin{eqnarray}
\notag \left|f_{t_n}-f_t\right|&\leq&\left|f_{t_n}-\E_{t_n}\left[G(\hat\Rho[\hat\Tau],\hat\Tau[\hat\Rho],t_n)\right]\right|+\left|\E_{t_n}\left[G(\hat\Rho[\hat\Tau],\hat\Tau[\hat\Rho],t_n)\right]-\E_{t_n}\left[G(\hat\Rho[\hat\Tau],\hat\Tau[\hat\Rho],t)\right]\right|\\
\notag &+&\left|\E_{t_n}\left[G(\hat\Rho[\hat\Tau],\hat\Tau[\hat\Rho],t)\right]-\E_t\left[G(\hat\Rho[\hat\Tau],\hat\Tau[\hat\Rho],t)\right]\right|+\left|\E_t\left[G(\hat\Rho[\Tau],\Tau[\hat\Rho],t)\right]-f_t\right|\\
\notag &\leq&\eps+\eta(1/n)+\left|\E_{t_n}\left[G(\hat\Rho[\hat\Tau],\hat\Tau[\hat\Rho],t)\right]-\E_t\left[G(\hat\Rho[\hat\Tau],\hat\Tau[\hat\Rho],t)\right]\right|+\eps.
\end{eqnarray}
Since the process $(E_{t+s}\left[G(\hat\Rho[\hat\Tau],\hat\Tau[\hat\Rho],t)\right])_{s\geq 0}$ is a martingale w.r.t. the filtration $(\cF_{t+s})_{s\geq 0}$ satisfying the usual conditions, by \cite[Theorem 3.13, page 16]{KS}, it is right continuous. Therefore,
$$\limsup_{n\rightarrow\infty}\left|f_{t_n}-f_t\right|\leq 2\eps.$$
By the arbitrariness of $\eps$, $\lim_{n\rightarrow\infty}f_{t_n}=f_t$, and thus the conclusion follows.
\end{proof}

\begin{lemma}\label{l5}
Let $G:[0,\infty]^3\times\Omega\mapsto\R$ such that $G(r,s,t)$ is $\cF_{r\vee s\vee t}$-measurable and satisfies \asref{a1}. Then the process $(g_t)_{t\geq 0}$ defined by
$$g_t:=\inf_{\Rho,\Tau\in\T_t^2}\E_t\left[G(\Rho[\Tau],\Tau[\Rho],t)\right]$$
is right continuous.
\end{lemma}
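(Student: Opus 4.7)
The plan is to follow the template of the proof of \leref{l4}, with \leref{l3} playing the role that \leref{l1} played there. Fix $t \in [0,\infty)$ and let $t_n \searrow t$ with $|t_n - t| < 1/n$. For a given $\eps > 0$, I invoke \leref{l3} with this $\eps$ to obtain the associated $h > 0$, and then apply the lemma at the reference point $t + h$ rather than at $t$, producing a single pair $(\hat\Rho,\hat\Tau) \in (\T_{t+h}^2)^2$ such that for every $\delta \in [0,h]$,
$$\E_{t+h-\delta}\left[G(\hat\Rho[\hat\Tau],\hat\Tau[\hat\Rho],t+h-\delta)\right] \leq g_{t+h-\delta} + \eps.$$
Specializing to $\delta = h$ and $\delta = h-(t_n - t)$ (valid for $n$ large, since $t_n - t < h$) shows that this single pair is simultaneously an $\eps$-optimizer for $g_t$ and for each $g_{t_n}$, using the inclusions $(\T_{t+h}^2)^2 \subset (\T_{t_n}^2)^2 \subset (\T_t^2)^2$ to ensure admissibility in all three problems.

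Having a fixed common test pair, I would then decompose $|g_{t_n} - g_t|$ via the triangle inequality into four pieces: two $\eps$-optimality errors bounded by $\eps$ each, a time-continuity term controlled by $\eta(|t_n - t|) \leq \eta(1/n)$ from \asref{a1}, and a martingale increment of the form $|\E_{t_n}[Y] - \E_t[Y]|$ where $Y := G(\hat\Rho[\hat\Tau],\hat\Tau[\hat\Rho],t)$. The last term vanishes as $n \to \infty$ because $(\E_{t+s}[Y])_{s \geq 0}$ is a martingale on a filtration satisfying the usual conditions, hence right continuous by \cite[Theorem 3.13, p.~16]{KS}, exactly as at the end of the proof of \leref{l4}. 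Letting $n \to \infty$ yields $\limsup_n |g_{t_n} - g_t| \leq 2\eps$, and sending $\eps \to 0$ delivers the right continuity of $g$.

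The main subtlety, and essentially the only obstacle, is the placement of the reference time in \leref{l3}. Applying the lemma at $t$ itself would only yield an $\eps$-optimizer for $g_{t-\delta}$ with $\delta \geq 0$, i.e., for times below $t$; such a pair cannot be tested against $g_{t_n}$ for $t_n > t$. Shifting the reference to $t + h$ turns the statement into a two-sided approximation over the interval $[t, t+h]$, which is exactly what the martingale step requires, since that step must use one fixed random variable $Y$ rather than a sequence $Y_n$ depending on $n$. Once this alignment is in place, the remainder is bookkeeping directly parallel to \leref{l4}.
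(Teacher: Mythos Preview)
Your proposal is correct and follows exactly the template the paper intends: it is the analogue of the proof of \leref{l4} with \leref{l3} substituted for \leref{l1}, and your four-term triangle decomposition together with the martingale right-continuity step matches the paper's argument verbatim. Your observation that the reference point must be taken at $t+h$ rather than at $t$ (so that the single pair $(\hat\Rho,\hat\Tau)$ is admissible and $\eps$-optimal simultaneously at $t$ and at each $t_n>t$) is a genuine clarification---the paper's displayed proof of \leref{l4} writes $(\hat\Rho,\hat\Tau)\in(\T_t^2)^2$ without spelling this out, but the inclusion $(\T_{t+h}^2)^2\subset(\T_t^2)^2$ makes the two formulations consistent.
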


\begin{proof}
The proof is similar to that for \leref{l4} and thus we omit it here.
\end{proof}

\begin{proposition}\label{p1}
Let $U:[0,\infty]^3\times\Omega\mapsto\R$ such that $U(r,s,t)$ is $\cF_{r\vee s\vee t}$-measurable and satisfies \asref{a1}. For $t\geq 0$, let
\begin{equation}\label{e6}
\underline V_t:=\sup_{\Rho\in\T_t^3}\inf_{\Tau,\Sig\in\T_t^3}\E_t\left[U(\Rho[\Tau,\Sig],\Tau[\Rho,\Sig],\Sig[\Rho,\Tau])\right]
\end{equation}
and
$$\overline V_t:=\inf_{\Tau,\Sig\in\T_t^3}\sup_{\Rho\in\T_t^3}\E_t\left[U(\Rho[\Tau,\Sig],\Tau[\Rho,\Sig],\Sig[\Rho,\Tau])\right].$$
Then for any $\mu\in\cT$,
\begin{equation}\label{e7}
\underline V_\mu=\overline V_\mu=\sup_{\rho\in\cT_\mu}\inf_{\theta\in\cT_\mu}\E_\mu\left[X_\rho 1_{\{\rho\leq\theta\}}+Y_\theta 1_{\{\rho>\theta\}}\right]=\inf_{\theta\in\cT_\mu}\sup_{\rho\in\cT_\mu}\E_\mu\left[X_\rho 1_{\{\rho\leq\theta\}}+Y_\theta 1_{\{\rho>\theta\}}\right],
\end{equation}
where for $t\geq 0$,
$$X_t:=\inf_{\Tau,\Sig\in\T_t^2}\E_t\left[U(t,\Tau[\Sig],\Sig[\Tau])\right],$$
and
$$Y_t:=\left(\sup_{\Rho\in\T_t^2}\inf_{\Sig\in\T_t^2}\E_t\left[U(\Rho[\Sig],t,\Sig[\Rho])\right]\right)\wedge\left(\sup_{\Rho\in\T_t^2}\inf_{\Tau\in\T_t^2}\E_t\left[U(\Rho[\Tau],\Tau[\Rho],t)\right]\right).$$
Moreover, there exists an $\eps$-saddle point for \eqref{e6} with $t$ replaced by $\mu$ for any $\eps>0$.
\end{proposition}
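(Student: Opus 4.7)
My plan is to reduce the three-player zero-sum game to an equivalent two-player Dynkin game between player 1 (the maximizer) and the coalition of players 2 and 3 (the minimizer). The coalition's move consists of a stopping time $\theta\in\cT_\mu$, the first time one of them stops, together with an $\cF_\theta$-measurable choice of which coalition member stops at $\theta$. On $\{\rho\leq\theta\}$, player 1 stops first; players 2 and 3 then play a cooperative minimization subgame on $U(\rho,\cdot,\cdot)$ whose value, by \leref{l2} extended to general $\mu\in\cT$ (valid since $\Omega$ is at most countable), is exactly $X_\rho$. On $\{\rho>\theta\}$, one of players 2, 3 has stopped first at $\theta$, triggering a two-player zero-sum subgame between player 1 and the remaining opponent; this subgame's sup-inf equals its inf-sup by \leref{l4}, and the coalition's choice of stopper achieves the smaller of the two possible values, which is precisely $Y_\theta$. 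Because taking $\Rho=(t,\ldots)$ in each sup-inf defining $Y_t$ and $\tau=t$ in $X_t$ shows $X_t\leq\inf_{\sigma\in\cT_t}\E_t[U(t,t,\sigma)]\leq Y_t$ a.s., and both processes are right continuous by \leref{l4} and \leref{l5}, the classical theory of zero-sum Dynkin games with right-continuous bounded payoffs satisfying $X\leq Y$ provides a common value $V_\mu^D$ for the sup-inf and inf-sup Dynkin problems appearing in \eqref{e7}, along with $\eps$-optimal stopping times $\rho^*,\theta^*\in\cT_\mu$ for any $\eps>0$.

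To prove $\underline V_\mu\geq V_\mu^D$, fix $\eps>0$ and choose $h>0$ small enough for \leref{l1}, \leref{l3}, and \leref{l6} to apply with this $\eps$ and $h$ and $\eta(h)<\eps$. On a grid $\{t_k\}$ of mesh $h$, I would select via \leref{l1} two $\eps$-saddle pairs at each $t_k$ for the two sup-inf subgames composing $Y_{t_k}$, each uniformly $\eps$-optimal when initiated at any $s\in[t_k-h,t_k]$. Let $\rho^*$ be $\eps$-optimal for the sup-inf Dynkin problem, and define $\hat\Rho\in\T_\mu^3$ with initial stop $\hat\rho=\rho^*$ and substrategies $\hat\rho_2(s),\hat\rho_{23}(s,\cdot)$ (resp.\ $\hat\rho_3(s),\hat\rho_{23}(\cdot,s)$) patched from the grid $\eps$-saddles at the unique $t_k$ satisfying $s\in[t_k-h,t_k]$. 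For any opponent strategies $\Tau,\Sig\in\T_\mu^3$, a case split on which player stops first gives: on $\{\rho^*\leq\tau\wedge\sigma\}$, the conditional payoff is at least $X_{\rho^*}$ by definition of $X$; on $\{\tau\wedge\sigma<\rho^*\}$, the $\eps$-saddle property yields a value within $\eps$ of the relevant sup-inf subgame, which dominates $Y_{\tau\wedge\sigma}$. Summing these contributions, taking $\inf$ over $\Tau,\Sig$, and invoking $\eps$-optimality of $\rho^*$ gives $\underline V_\mu\geq V_\mu^D-C\eps$ for some absolute constant $C$. The reverse inequality $\overline V_\mu\leq V_\mu^{D,+}$ is then established by a symmetric construction in which the coalition uses initial stops based on $\theta^*$ and an $\cF_{\theta^*}$-measurable partition attaining $Y_{\theta^*}$, substrategies from \leref{l3} (grid-based $\eps$-cooperative minimizers) after player 1 stops first, and substrategies from \leref{l1} (grid-based $\eps$-saddles) for the surviving coalition member after the other stops.

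Combining both bounds with $\underline V_\mu\leq\overline V_\mu$ and the Dynkin identity $V_\mu^D=V_\mu^{D,+}$ forces all four values in \eqref{e7} to coincide; the triple $(\hat\Rho,\hat\Tau,\hat\Sig)$ constructed above is then the claimed $\eps$-saddle point for \eqref{e6} at $\mu$ after an absolute-constant rescaling of $\eps$. The hardest step will be the measurable patching of the substrategies across the time grid and across the nested case decomposition for ``who stops first''; without the uniformity in $\delta\in[0,h]$ that \leref{l1}, \leref{l3}, and \leref{l6} were deliberately formulated to provide, one would lose control each time a player's actual stopping time deviates from a grid point, and the total error could balloon beyond any fixed multiple of $\eps$.
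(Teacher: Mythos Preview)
Your proposal is correct and follows essentially the same approach as the paper's proof: reduce to a classical Dynkin game with payoffs $X$ and $Y$, invoke $X\leq Y$ and right continuity (Lemmas~\ref{l4} and~\ref{l5}) to obtain a value $V_\mu$ with $\eps$-optimal $(\hat\rho,\hat\theta)$, then build $\hat\Rho,\hat\Tau,\hat\Sig$ by patching grid-based $\eps$-optimizers from Lemmas~\ref{l1},~\ref{l3},~\ref{l6} via the map $\phi_h(t)=([t/h]+1)h$, and verify the saddle inequalities by a case split on who stops first. The paper carries this out with explicit constants (a $17\eps$-saddle), and in particular handles the diagonal cases $\{\hat\rho=\hat\theta\}$ and $\{\tau=\sigma\}$ via the auxiliary quantities $Z_t^{12},Z_t^{13},Z_t^{23}$ and the inequalities $X_t\leq Z_t^{12}\leq Y_t^2$, $Z_t^{23}\geq Y_t$, etc.; you should make these diagonal cases explicit when filling in details, but your invocation of \leref{l6} shows you anticipated them.
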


\begin{proof}
Denote
$$Y_t^2:=\sup_{\Rho\in\T_t^2}\inf_{\Sig\in\T_t^2}\E_t\left[U(\Rho[\Sig],t,\Sig[\Rho])\right]\quad\text{and}\quad Y_t^3:=\sup_{\Rho\in\T_t^2}\inf_{\Tau\in\T_t^2}\E_t\left[U(\Rho[\Tau],\Tau[\Rho],t)\right].$$
For $t\geq 0$, let $\pmb\theta=(t,\theta_1(s)\equiv s+1)\in\T_t^2$. Then
$$X_t\leq\inf_{\Sig\in\T_t^2}\E_t\left[U(t,t,\Sig[\pmb\theta])\right]\leq Y_t^2.$$
Similarly, we can show that $X_t\leq Y_t^3$. Hence,
$$X_t\leq Y_t.$$
Moreover, by Lemmas \ref{l4} and \ref{l5}, $(X_t)_{t\geq 0}$ and $(Y_t)_{t\geq 0}$ are right continuous. Then from the classical theory of Dynkin games, we have that for $t\geq 0$,
\begin{eqnarray}
\label{e8} V_t&:=&\sup_{\rho\in\cT_t}\inf_{\theta\in\cT_t}\E_t\left[X_\rho 1_{\{\rho\leq\theta\}}+Y_\theta 1_{\{\rho>\theta\}}\right]=\inf_{\theta\in\cT_t}\sup_{\rho\in\cT_t}\E_t\left[X_\rho 1_{\{\rho\leq\theta\}}+Y_\theta 1_{\{\rho>\theta\}}\right]\\
\notag &=&\sup_{\rho\in\cT_t}\inf_{\theta\in\cT_t}\E_t\left[X_\rho 1_{\{\rho<\theta\}}+Y_\theta 1_{\{\rho\geq\theta\}}\right]=\inf_{\theta\in\cT_t}\sup_{\rho\in\cT_t}\E_t\left[X_\rho 1_{\{\rho<\theta\}}+Y_\theta 1_{\{\rho\geq\theta\}}\right],
\end{eqnarray}
and the process $(V_t)_{t\geq 0}$ is right continuous. 

Now fix $\mu\in\cT$ and let $\eps>0$. Define $\hat\rho,\hat\theta\in\cT_\mu$ as
\begin{equation}\label{e9}
\hat\rho:=\inf\{t\geq \mu:\ V_t\leq X_t+\eps\}\quad\text{and}\quad\hat\theta:=\inf\{t\geq \mu:\ V_t\geq Y_t-\eps\}.
\end{equation}
Then $(\hat\rho,\hat\theta)\in(\cT_\mu)^2$ is an $\eps$-saddle point for the Dynkin game $V_\mu$ defined in \eqref{e8} with $t$ replaced by $\mu$.  By \leref{l3}, there exists $h^x>0$ such that for any $t\geq 0$, there exists $(\hat\Tau_t^1=(\hat\tau_t^1,\hat\tau_{t,3}^1),\hat\Sig_t^1=(\hat\sigma_t^1,\hat\sigma_{t,2}^1))\in(\T_t^2)^2$ being an $\eps$-optimizer for $X_{t'}$ for any $t'\in[t-h^x,t]$. Similarly, by \leref{l1}, there exists $h^y>0$, such that for $t\geq 0$, there exist $(\hat\Rho_t^2=(\hat\rho_t^2,\hat\rho_{t,3}^2),\hat\Sig_t^2=(\hat\sigma_t^2,\sigma_{t,1}^2)),(\hat\Rho_t^3=(\hat\rho_t^3,\hat\rho_{t,2}^3),\hat\Tau_t^3=(\hat\tau_t^3,\hat\tau_{t,1}^3))\in(\T_t^2)^2$ being $\eps$-saddle points for $Y_{t'}^2$ and $Y_{t'}^3$ respectively for any $t'\in[t-h^y,t]$. Furthermore, by \leref{l6} there exists $h^z>0$ such that for any $t\geq 0$, there exist $\hat\rho_t^{23},\hat\tau_t^{13},\hat\sigma_t^{12}\in\cT_t$ being $\eps$-optimizers for
$$Z_{t'}^{23}:=\sup_{\rho\in\cT_{t'}}\E_{t'}\left[U(\rho,t',t')\right],\quad Z_{t'}^{13}:=\inf_{\tau\in\cT_{t'}}\E_{t'}\left[U(t',\tau,t')\right],\quad Z_{t'}^{12}:=\inf_{\sigma\in\cT_{t'}}\E_{t'}\left[U(t',t',\sigma)\right]$$
respectively for any $t'\in[t-h^z,t]$. Let $h:=h^x\wedge h^y\wedge h^z$, and
$$A:=\{V_{\hat\theta}\geq Y_{\hat\theta}^2-\eps\}.$$
For any $t\geq 0$, define
\begin{equation}\label{e17}
\phi_h(t):=([t/h]+1)h
\end{equation}
Observe that $\phi_t(\cdot)$ is right continuous and $\phi_h(t)>t$ for $t<\infty$.

Recall $\hat\rho$ defined in \eqref{e9}. Now define $\hat\Rho=(\hat\rho,\hat\rho_2,\hat\rho_3,\hat\rho_{23}), \hat\Tau=(\hat\tau,\hat\tau_1,\hat\tau_3,\hat\tau_{13}), \hat\Sig=(\hat\sigma,\hat\sigma_1,\hat\sigma_2,\hat\sigma_{12})$ as follows.
$$\hat\tau:=
\begin{cases}
\hat\theta, & \text{on}\ A,\\
\infty, & \text{on}\ A^c,
\end{cases}
\quad\text{and}\quad
\hat\sigma:=
\begin{cases}
\hat\theta, & \text{on}\ A^c,\\
\infty, & \text{on}\ A,
\end{cases}
$$
and
$$\hat\rho_2(t):=\hat\rho_{\phi_h(t)}^2,\quad\hat\rho_3(t):=\hat\rho_{\phi_h(t)}^3,\quad\hat\rho_{23}(s,t):=
\begin{cases}
\hat\rho_{\phi_h(s),3}^2(t), & \text{if}\ s<t,\\
\hat\rho_{\phi_h(t),2}^3(s), & \text{if}\ s>t,\\
\hat\rho^{23}_{\phi_h(s)}, & \text{if}\ s=t,
\end{cases}
$$
and
$$
\hat\tau_1(t):=\hat\tau_{\phi_h(t)}^1,\quad\hat\tau_3(t):=\hat\tau_{\phi_h(t)}^3,\quad\hat\tau_{13}(s,t):=
\begin{cases}
\hat\tau_{\phi_h(s),3}^1(t), & \text{if}\ s<t,\\
\hat\tau_{\phi_h(t),1}^3(s), & \text{if}\ s>t,\\
\hat\tau^{13}_{\phi_h(s)}, & \text{if}\ s=t,
\end{cases}
$$
and
$$
\hat\sigma_1(t):=\hat\sigma_{\phi_h(t)}^1,\quad\hat\sigma_2(t):=\hat\sigma_{\phi_h(t)}^2,\quad\hat\sigma_{12}(s,t):=
\begin{cases}
\hat\sigma_{\phi_h(s),2}^1(t), & \text{if}\ s<t,\\
\hat\sigma_{\phi_h(t),1}^2(s), & \text{if}\ s>t,\\
\hat\sigma^{12}_{\phi_h(s)}, & \text{if}\ s=t.
\end{cases}
$$
It can be shown that $\hat\Rho,\hat\Tau,\hat\Sig\in\T_\mu^3$. In the rest of the proof, we will show that $(\hat\Rho,(\hat\Tau,\hat\Sig))$ is a $17\eps$-saddle point for \eqref{e6} with $t$ replaced by $\mu$.

\textbf{Part 1}. We show that
\begin{equation}\label{e12}
\left|\E_\mu\left[U(\hat\Rho[\hat\Tau,\hat\Sig],\hat\Tau[\hat\Rho,\hat\Sig],\hat\Sig[\hat\Rho,\hat\Tau])\right]-V_\mu\right|\leq 8\eps,
\end{equation}
where $V$ is defined in \eqref{e8}. We consider five cases. 

\textbf{Case 1.1}: $A_1:=\{\hat\rho<\hat\theta\}$.
\begin{eqnarray}
\notag \E_\mu\left[U(\hat\Rho[\hat\Tau,\hat\Sig],\hat\Tau[\hat\Rho,\hat\Sig],\hat\Sig[\hat\Rho,\hat\Tau])1_{A_1}\right]&=&\E_\mu\left[\E_{\hat\rho}\left[U\left(\hat\rho,\hat\Tau^1_{\phi_h(\hat\rho)}\left[\hat\Sig_{\phi_h(\hat\rho)}^1\right],\hat\Sig_{\phi_h(\hat\rho)}^1\left[\hat\Tau_{\phi_h(\hat\rho)}^1\right]\right)\right]1_{A_1}\right]\\
\notag &\in&\left[\E_\mu\left[X_{\hat\rho}1_{A_1}\right],\E_t\left[X_{\hat\rho}1_{A_1}\right]+\eps\right].
\end{eqnarray}

\textbf{Case 1.2}: $A_2:=\{\hat\rho=\hat\theta\}\cap A$.
It can be shown that for any $t\geq 0$,
\begin{equation}\label{e14}
X_t\leq Z_t^{12}\leq Y_t^2.
\end{equation}
Hence, on $A$,
\begin{equation}\label{e10}
Z_{\hat\theta}^{12}\leq Y_{\hat\theta}^2\leq V_{\hat\theta}+\eps\leq Y_{\hat\theta}+\eps.
\end{equation}
Then
$$\E_\mu\left[U(\hat\Rho[\hat\Tau,\hat\Sig],\hat\Tau[\hat\Rho,\hat\Sig],\hat\Sig[\hat\Rho,\hat\Tau])1_{A_2}\right]=\E_\mu\left[\E_{\hat\theta}\left[U\left(\hat\theta,\hat\theta,\hat\Sig_{\phi_h(\hat\theta)}^{12}\right)\right]1_{A_2}\right]\in\left[\E_\mu\left[X_{\hat\theta} 1_{A_2}\right],\E_\mu\left[Y_{\hat\theta} 1_{A_2}\right]+2\eps\right].$$

\textbf{Case 1.3}: $A_3:=\{\hat\rho=\hat\theta\}\cap A^c$.
It can be shown that for any $t\geq 0$,
\begin{equation}\label{e15}
X_t\leq Z_t^{13}\leq Y_t^3.
\end{equation}
By the definition of $\hat\theta$ in \eqref{e9}, we have that on $A^c$,
$$Y_{\hat\theta}^2\wedge Y_{\hat\theta}^3-\eps\leq V_{\hat\theta}<Y_{\hat\theta}^2-\eps.$$
This implies that on $A^c$,
\begin{equation}\label{e11}
Y_{\hat\theta}^2>Y_{\hat\theta}^3=Y_{\hat\theta}^2\wedge Y_{\hat\theta}^3=Y_{\hat\theta}.
\end{equation}
Then
$$\E_\mu\left[U(\hat\Rho[\hat\Tau,\hat\Sig],\hat\Tau[\hat\Rho,\hat\Sig],\hat\Sig[\hat\Rho,\hat\Tau])1_{A_3}\right]=\E_\mu\left[\E_{\hat\theta}\left[U\left(\hat\theta,\hat\Tau_{\phi_h(\hat\theta)}^{13},\hat\theta\right)\right]1_{A_3}\right]\in\left[\E_\mu\left[X_{\hat\theta} 1_{A_3}\right],\E_\mu\left[Y_{\hat\theta} 1_{A_3}\right]+\eps\right].$$

\textbf{Case 1.4}: $A_4:=\{\hat\rho>\hat\theta\}\cap A$. By \eqref{e10}, we have that
\begin{eqnarray}
\notag \E_\mu\left[U(\hat\Rho[\hat\Tau,\hat\Sig],\hat\Tau[\hat\Rho,\hat\Sig],\hat\Sig[\hat\Rho,\hat\Tau])1_{A_4}\right]&=&\E_\mu\left[\E_{\hat\theta}\left[U\left(\hat\Rho^2_{\phi_h(\hat\theta)}\left[\hat\Sig_{\phi_h(\hat\theta)}^2\right],\hat\theta,\hat\Sig_{\phi_h(\hat\theta)}^2\left[\hat\Rho_{\phi_h(\hat\theta)}^2\right]\right)\right]1_{A_4}\right]\\
\notag &\in&\left[\E_\mu\left[Y_{\hat\theta}^2 1_{A_4}\right]-\eps,\E_\mu\left[Y_{\hat\theta}^2 1_{A_4}\right]+\eps\right]\\
\notag &\subset&\left[\E_\mu\left[Y_{\hat\theta} 1_{A_4}\right]-\eps,\E_\mu\left[Y_{\hat\theta} 1_{A_4}\right]+2\eps\right].
\end{eqnarray}

\textbf{Case 1.5}: $A_5:=\{\hat\rho>\hat\theta\}\cap A^c$. By \eqref{e11}, we have that
\begin{eqnarray}
\notag \E_\mu\left[U(\hat\Rho[\hat\Tau,\hat\Sig],\hat\Tau[\hat\Rho,\hat\Sig],\hat\Sig[\hat\Rho,\hat\Tau])1_{A_5}\right]&=&\E_\mu\left[\E_{\hat\theta}\left[U\left(\hat\Rho^3_{\phi_h(\hat\theta)}\left[\hat\Tau_{\phi_h(\hat\theta)}^3\right],\hat\Tau_{\phi_h(\hat\theta)}^3\left[\hat\Rho_{\phi_h(\hat\theta)}^3\right],\hat\theta\right)\right]1_{A_5}\right]\\
\notag &\in&\left[\E_\mu\left[Y_{\hat\rho}^3 1_{A_5}\right]-\eps,\E_\mu\left[Y_{\hat\rho}^3 1_{A_5}\right]+\eps\right]\\
\notag &\subset&\left[\E_\mu\left[Y_{\hat\rho} 1_{A_4}\right]-\eps,\E_\mu\left[Y_{\hat\rho} 1_{A_5}\right]+\eps\right].
\end{eqnarray}
From cases 1.1-1.5, we have that
\begin{eqnarray}
\notag V_\mu-3\eps&\leq&\E_\mu\left[X_{\hat\rho} 1_{\{\hat\rho\leq\hat\theta\}}+Y_{\hat\theta} 1_{\{\hat\rho>\hat\theta\}}\right]-2\eps\\
\notag &\leq&\E_\mu\left[U(\hat\Rho[\hat\Tau,\hat\Sig],\hat\Tau[\hat\Rho,\hat\Sig],\hat\Sig[\hat\Rho,\hat\Tau])\right]\\
\notag &\leq&\E_\mu\left[X_{\hat\rho} 1_{\{\hat\rho<\hat\theta\}}+Y_{\hat\theta} 1_{\{\hat\rho\geq\hat\theta\}}\right]+7\eps\leq V_\mu+8\eps.
\end{eqnarray}

\textbf{Part 2}: We show that for any $\Rho\in\T_t^3$,
\begin{equation}\label{e13}
\E_\mu\left[U(\Rho[\hat\Tau,\hat\Sig],\hat\Tau[\Rho,\hat\Sig],\hat\Sig[\Rho,\hat\Tau])\right]\leq V_\mu+9\eps.
\end{equation}
Take $\Rho=(\rho,\rho_2,\rho_3,\rho_{23})\in\T_\mu^3$. We consider five cases.

\textbf{Case 2.1}: $B_1:=\{\rho<\hat\theta\}$.
\begin{eqnarray}
\notag \E_\mu\left[U(\Rho[\hat\Tau,\hat\Sig],\hat\Tau[\Rho,\hat\Sig],\hat\Sig[\Rho,\hat\Tau])1_{B_1}\right]&=&\E_\mu\left[\E_{\rho}\left[U\left(\rho,\hat\Tau^1_{\phi_h(\rho)}\left[\hat\Sig_{\phi_h(\rho)}^1\right],\hat\Sig_{\phi_h(\rho)}^1\left[\hat\Tau_{\phi_h(\rho)}^1\right]\right)\right]1_{B_1}\right]\\
\notag &\leq&\E_\mu\left[X_\rho 1_{B_1}\right]+\eps.
\end{eqnarray}

\textbf{Case 2.2}: $B_2:=\{\rho=\hat\theta\}\cap A$. By \eqref{e14} and \eqref{e10},
$$\E_\mu\left[U(\Rho[\hat\Tau,\hat\Sig],\hat\Tau[\Rho,\hat\Sig],\hat\Sig[\Rho,\hat\Tau])1_{B_2}\right]=\E_\mu\left[\E_{\hat\theta}\left[U\left(\hat\theta,\hat\theta,\hat\Sig_{\phi_h(\hat\theta)}^{12}\right)\right]1_{A_2}\right]\leq\E_\mu\left[Y_{\hat\theta} 1_{B_2}\right]+2\eps.$$

\textbf{Case 2.3}: $B_3:=\{\rho=\hat\theta\}\cap A^c$. By \eqref{e15} and \eqref{e11},
$$\E_\mu\left[U(\Rho[\hat\Tau,\hat\Sig],\hat\Tau[\Rho,\hat\Sig],\hat\Sig[\Rho,\hat\Tau])1_{B_3}\right]=\E_\mu\left[\E_{\hat\theta}\left[U\left(\hat\theta,\hat\Tau_{\phi_h(\hat\theta)}^{13},\hat\theta\right)\right]1_{B_3}\right]\leq\E_\mu\left[Y_{\hat\theta} 1_{B_3}\right]+\eps.$$

\textbf{Case 2.4}: $B_4:=\{\rho>\hat\theta\}\cap A$. Define $\Rho_{\hat\theta}^2:=(\rho_2(\hat\theta),\rho_{23}(\hat\theta,\cdot))\in\T_{\hat\theta}^2$. We have that
\begin{eqnarray}
\notag \E_\mu\left[U(\Rho[\hat\Tau,\hat\Sig],\hat\Tau[\Rho,\hat\Sig],\hat\Sig[\Rho,\hat\Tau])1_{B_4}\right]&=&\E_\mu\left[\E_{\hat\theta}\left[U\left(\Rho_{\hat\theta}^2\left[\hat\Sig_{\phi_h(\hat\theta)}^2\right],\hat\theta,\hat\Sig_{\phi_h(\hat\theta)}^2\left[\Rho_{\hat\theta}^2\right]\right)\right]1_{B_4}\right]\\
\notag &\leq&\E_\mu\left[Y_{\hat\theta}^2 1_{B_4}\right]+\eps\\
\notag &\leq&\E_\mu\left[Y_{\hat\theta} 1_{B_4}\right]+2\eps.
\end{eqnarray}

\textbf{Case 2.5}: $B_5:=\{\rho>\hat\theta\}\cap A^c$. Define $\Rho_{\hat\theta}^3:=(\rho_3(\hat\theta),\rho_{23}(\cdot,\hat\theta))\in\T_{\hat\theta}^2$. We have that
\begin{eqnarray}
\notag \E_\mu\left[U(\Rho[\hat\Tau,\hat\Sig],\hat\Tau[\Rho,\hat\Sig],\hat\Sig[\Rho,\hat\Tau])1_{B_5}\right]&=&\E_\mu\left[\E_{\hat\theta}\left[U\left(\Rho_{\hat\theta}^3\left[\hat\Tau_{\phi_h(\hat\theta)}^3\right],\hat\Tau_{\phi_h(\hat\theta)}^3\left[\Rho_{\hat\theta}^3\right],\hat\theta\right)\right]1_{B_5}\right]\\
\notag &\leq&\E_\mu\left[Y_{\hat\theta}^3 1_{B_5}\right]+\eps\\
\notag &=&\E_\mu\left[Y_{\hat\theta} 1_{B_5}\right]+\eps.
\end{eqnarray}

From cases 2.1-2.5, we have that
\begin{eqnarray}
\notag \E_\mu\left[U(\Rho[\hat\Tau,\hat\Sig],\hat\Tau[\Rho,\hat\Sig],\hat\Sig[\Rho,\hat\Tau])\right]&\leq&\E_\mu\left[X_\rho 1_{\{\rho<\hat\theta\}}+Y_{\hat\theta} 1_{\{\rho\geq\hat\theta\}}\right]+7\eps\\
\notag &\leq& \E_\mu\left[X_{\hat\rho} 1_{\{\hat\rho<\hat\theta\}}+Y_{\hat\theta} 1_{\{\hat\rho\geq\hat\theta\}}\right]+8\eps\\
\notag &\leq& V_\mu+9\eps.
\end{eqnarray}

\textbf{Part 3}: We show that for any $(\Tau,\Sig)\in(\T_\mu^3)^2$,
\begin{equation}\label{e16}
\E_\mu\left[U(\hat\Rho[\Tau,\Sig],\Tau[\hat\Rho,\Sig],\Sig[\hat\Rho,\Tau])\right]\geq V_\mu-5\eps.
\end{equation}
Take $(\Tau=(\tau,\tau_1,\tau_3,\tau_{13}),\Sig=(\sigma,\sigma_1,\sigma_2,\sigma_{12}))\in(\T_\mu^3)^2$. We consider four cases.

\textbf{Case 3.1}: $C_1:=\{\hat\rho\leq\tau\wedge\sigma\}$.
$$\E_\mu\left[U(\hat\Rho[\Tau,\Sig],\Tau[\hat\Rho,\Sig],\Sig[\hat\Rho,\Tau])1_{C_1}\right]=\E_\mu\left[\E_{\hat\rho}\left[U(\hat\rho,\Tau[\hat\Rho,\Sig],\Sig[\hat\Rho,\Tau])\right]1_{C_1}\right]\geq\E_\mu\left[X_{\hat\rho}1_{C_1}\right].$$

\textbf{Case 3.2}: $C_2:=\{\hat\rho>\tau\wedge\sigma\}\cap\{\tau=\sigma\}$. It can be shown that for any $t\geq 0$,
$$Z_t^{23}\geq Y_t^2,Y_t^3\geq Y_t.$$
Then we have that
\begin{eqnarray}
\notag \E_\mu\left[U(\hat\Rho[\Tau,\Sig],\Tau[\hat\Rho,\Sig],\Sig[\hat\Rho,\Tau])1_{C_2}\right]&=&\E_\mu\left[\E_\tau\left[U\left(\hat\rho_{\phi_h(\tau)}^{23},\tau,\tau\right)\right]1_{C_2}\right]\\
\notag &\geq&\E_\mu\left[Z_\tau^{23}1_{C_2}\right]-\eps\\
\notag &\geq&\E_\mu\left[Y_{\tau\wedge\sigma} 1_{C_2}\right]-\eps.
\end{eqnarray}

\textbf{Case 3.3}: $C_3:=\{\hat\rho>\tau\wedge\sigma\}\cap\{\tau<\sigma\}$. Define $\Sig_\tau^2:=(\sigma_2(\tau),\sigma_{12}(\cdot,\tau)\in\T_\tau^2$. Then
\begin{eqnarray}
\notag \E_\mu\left[U(\hat\Rho[\Tau,\Sig],\Tau[\hat\Rho,\Sig],\Sig[\hat\Rho,\Tau])1_{C_3}\right]&=&\E_\mu\left[\E_\tau\left[U\left(\hat\Rho^2_{\phi_h(\tau)}\left[\Sig_\tau^2\right],\tau,\Sig_\tau^2\left[\hat\Rho_{\phi_h(\tau)}^2\right]\right)\right]1_{C_3}\right]\\
\notag &\geq&\E_\mu\left[Y_\tau^2 1_{C_3}\right]-\eps\\
\notag &\geq&\E_\mu\left[Y_{\tau\wedge\sigma} 1_{C_3}\right]-\eps.
\end{eqnarray}

\textbf{Case 3.4}: $C_4:=\{\hat\rho>\tau\wedge\sigma\}\cap\{\tau>\sigma\}$. Similar to case 3.3, we can show that
$$\E_\mu\left[U(\hat\Rho[\Tau,\Sig],\Tau[\hat\Rho,\Sig],\Sig[\hat\Rho,\Tau])1_{C_4}\right]\leq\E_\mu\left[Y_{\tau\wedge\sigma} 1_{C_4}\right]-\eps.$$
From cases 3.1-3.4, we have that
\begin{eqnarray}
\notag \E_\mu\left[U(\hat\Rho[\Tau,\Sig],\Tau[\hat\Rho,\Sig],\Sig[\hat\Rho,\Tau])\right]&\geq&\E_\mu\left[X_{\hat\rho} 1_{\{\hat\rho\leq\tau\wedge\sigma\}}+Y_{\tau\wedge\sigma} 1_{\{\hat\rho>\tau\wedge\sigma\}}\right]-3\eps\\
\notag &\geq&\E_\mu\left[X_{\hat\rho} 1_{\{\hat\rho\leq\hat\theta\}}+Y_{\tau\wedge\sigma} 1_{\{\hat\rho>\hat\theta\}}\right]-4\eps\\
\notag &\geq&V_\mu-5\eps.
\end{eqnarray}
By \eqref{e12},\eqref{e15} and \eqref{e16}, $(\hat\Rho,(\hat\Tau,\hat\Sig))\in(\T_\mu^3)^3$ is a $17\eps$-saddle point for \eqref{e6} with $t$ replaced by $\mu$.
\end{proof}

\section{Proof of \thref{t1}}

In this section, we will construct an $\eps$-Nash equilibrium of the three-player game by using $\eps$-Nash equilibriums of two-player games, as well as $\eps$-saddle points of games like \eqref{e6}.

Let $\eps>0$ be fixed. By lemmas \ref{l2}-\ref{l6}, we can choose $h>0$, such that for any $t$, there exist $(\overline\Tau_t^1=(\overline\tau_t^1,\overline\tau_{t,3}^1),\overline\Sig_t^1=(\overline\sigma_t^1,\overline\sigma_{t,2}^1))\in(\T_t)^2$ being an $\eps$-Nash equilibrium for the game
$$\E_{t'}\left[U^i(t',\Tau[\Sig],\Sig[\Tau])\right],\quad\Tau,\Sig\in\T_{t'},\quad i=2,3,$$
for any $t'\in[t-h,t]$, and $(\overline\Rho_t^2=(\overline\rho_t^2,\overline\rho_{t,3}^2),\overline\Sig_t^2=(\overline\sigma_t^2,\overline\sigma_{t,1}^2))\in(\T_t)^2$ being an $\eps$-Nash equilibrium for the game
$$\E_{t'}\left[U^i(\Rho[\Sig],t',\Sig[\Rho])\right],\quad\Rho,\Sig\in\T_{t'},\quad i=1,3,$$
for any $t'\in[t-h,t]$, and $(\overline\Rho_t^3=(\overline\rho_t^3,\overline\rho_{t,2}^3),\overline\Tau_t^3=(\overline\tau_t^3,\overline\tau_{t,1}^3))\in(\T_t)^2$ being an $\eps$-Nash equilibrium for the game
$$\E_{t'}\left[U^i(\Rho[\Tau],\Tau[\Rho],t')\right],\quad\Rho,\Tau\in\T_{t'},\quad i=1,2,$$
for any $t'\in[t-h,t]$, and $\underline\rho_t^{i,23}\in\cT_t$ being an $\eps$-optimizer for
$$\inf_{\rho\in\cT_{t'}}\E_{t'}\left[U^i(\rho,t',t')\right]$$
for any $t'\in[t-h,t]$ for $i=2,3$, and $\underline\tau_t^{i,13}\in\cT_t$ being an $\eps$-optimizer for 
$$\inf_{\tau\in\cT_{t'}}\E_{t'}\left[U^i(t',\tau,t')\right]$$
for any $t'\in[t-h,t]$ for $i=1,3$, and $\underline\sigma_t^{i,12}\in\cT_t$ being an $\eps$-optimizers for
$$\inf_{\sigma\in\cT_{t'}}\E_{t'}\left[U^i(t',t',\sigma)\right].$$

Recall $\phi_h(\cdot)$ defined in \eqref{e17}. For $t\geq 0$, define
\begin{eqnarray}
\notag X_t^1&:=&\inf_{\Tau,\Sig\in\T_t}\E_t\left[U^1(t,\Tau[\Sig],\Sig[\Tau])\right],\\
\notag Z_t^1&:=&\E_t\left[U^1\left(t,\overline\Tau_{\phi_h(t)}^1\left[\overline\Sig_{\phi_h(t)}^1\right],\overline\Sig_{\phi_h(t)}^1\left[\overline\Tau_{\phi_h(t)}^1 \right]\right)\right],\\
\notag Y_t^{1,2}&:=&\E_t\left[U^1\left(\overline\Rho_{\phi_h(t)}^2\left[\overline\Sig_{\phi_h(t)}^2\right],t,\overline\Sig_{\phi_h(t)}^2\left[\overline\Rho_{\phi_h(t)}^2\right]\right)\right],\\
\notag Y_t^{1,3}&:=&\E_t\left[U^1\left(\overline\Rho_{\phi_h(t)}^3\left[\overline\Tau_{\phi_h(t)}^3\right],\overline\Tau_{\phi_h(t)}^3\left[\overline\Rho_{\phi_h(t)}^3\right],t\right)\right],\\
\notag Y_t^1&:=&Y_t^{1,2}\wedge Y_t^{1,3}+\eps,\\
\notag V_t^1&:=&\sup_{\rho\in\cT_\theta}\inf_{\lambda\in\cT_\theta}\E_t\left[X_\rho^1 1_{\{\rho\leq\lambda\}}+Y_\lambda^1 1_{\{\rho>\lambda\}}\right],\\
\notag \mu^1&:=&\inf\{t\geq\theta:\ V_t^1\leq W_t^1+\eps\},
\end{eqnarray}
and $X_t^2,Z_t^2,Y_t^{2,1},Y_t^{2,3},Y_t^2,V_t^2,\mu^2,X_t^3,Z_t^3,Y_t^{3,1},Y_t^{3,2},Y_t^3,V_t^3,\mu^3$ in a symmetric way.

\begin{lemma}\label{l8}
For $i=1,2,3$ and any $t\geq 0$, $X_t^i\leq Y_t^i$.
\end{lemma}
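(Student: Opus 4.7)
The plan is to reduce the lemma to showing $X_t^i \leq Y_t^{i,j} + \eps$ for each pair $i \neq j$. Once this is established,
\[
X_t^i \leq Y_t^{i,j} \wedge Y_t^{i,k} + \eps = Y_t^i, \qquad \{i,j,k\} = \{1,2,3\},
\]
is immediate from the definition of $Y_t^i$. The six pairwise comparisons are structurally identical after relabelling the players; I will sketch only the case $(i,j) = (1,2)$.

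The mechanism mimics the estimate $X_t \leq Y_t^2$ appearing in the proof of \prref{p1}: on both sides of the target inequality we plug in the same ``degenerate'' strategy $(t, s \mapsto s+1) \in \T_t$ that stops immediately at time $t$. Since $X_t^1$ is an infimum, one may specialize $\Tau = \Tau^\dagger := (t, s \mapsto s+1)$ and $\Sig = \overline\Sig^2_{\phi_h(t)}$ in the definition of $X_t^1$. Because $\overline\Sig^2_{\phi_h(t)} \geq \phi_h(t) > t$, a direct evaluation of $\Tau^\dagger[\overline\Sig^2_{\phi_h(t)}]$ and $\overline\Sig^2_{\phi_h(t)}[\Tau^\dagger]$ yields
\[
X_t^1 \leq \E_t\bigl[U^1\bigl(t,\, t,\, \overline\sigma^2_{\phi_h(t),1}(t)\bigr)\bigr].
\]

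On the other hand, because $t \in [\phi_h(t) - h, \phi_h(t)]$, the pair $(\overline\Rho^2_{\phi_h(t)}, \overline\Sig^2_{\phi_h(t)})$ is, by its defining property, an $\eps$-Nash equilibrium at time $t$ for the two-player game $\E_t[U^i(\Rho[\Sig], t, \Sig[\Rho])]$, $i \in \{1,3\}$. Letting player 1 deviate to $\Rho = (t, s \mapsto s+1)$ produces exactly the payoff $U^1(t, t, \overline\sigma^2_{\phi_h(t),1}(t))$ computed above, and the $\eps$-Nash inequality for player 1 gives
\[
\E_t\bigl[U^1\bigl(t,\, t,\, \overline\sigma^2_{\phi_h(t),1}(t)\bigr)\bigr] \leq Y_t^{1,2} + \eps.
\]
Combining the two displays yields $X_t^1 \leq Y_t^{1,2} + \eps$. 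The bound $X_t^1 \leq Y_t^{1,3} + \eps$ follows identically using the equilibrium $(\overline\Rho^3_{\phi_h(t)}, \overline\Tau^3_{\phi_h(t)})$ with a degenerate player-3 strategy in place of a degenerate player-2 strategy, and the cases $i \in \{2,3\}$ are symmetric by relabelling.

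I do not anticipate any genuine obstacle; the argument is essentially bookkeeping. The only point that requires a moment's care is verifying that the degenerate strategy fed into the definition of $X_t^i$ and the corresponding degenerate deviation in the $\eps$-Nash inequality produce \emph{exactly} the same expression under $U^i$, which is straightforward from $\phi_h(t) > t$ together with the reaction conventions in Example 2.3.
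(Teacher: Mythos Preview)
Your proposal is correct and follows essentially the same argument as the paper: specialize the infimum defining $X_t^1$ to a degenerate ``stop immediately at $t$'' strategy paired with $\overline\Sig^2_{\phi_h(t)}$, then recognize the resulting expression as player~1's payoff from a unilateral deviation in the two-player $\eps$-Nash equilibrium defining $Y_t^{1,2}$. The only cosmetic difference is that the paper uses $(t,\infty)$ for the degenerate strategy while you use $(t,\,s\mapsto s+1)$; both yield the identical realized stopping times since only the first component matters here (incidentally, the relevant reaction convention is Example~2.2 rather than~2.3).
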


\begin{proof}
For $t\geq 0$, let $\tilde\Rho=\Tilde\Tau=(t,\infty)\in\T_t^2$. Then
\begin{eqnarray}
\notag X_t^1&\leq&\E_t\left[U^1\left(t,\tilde\Tau\left[\overline\Sig_{\phi_h(t)}^2\right],\overline\Sig_{\phi_h(t)}^2\left[\tilde\Tau\right]\right)\right]\\
\notag &=&\E_t\left[U^1\left(t,t,\overline\Sig_{\phi_h(t)}^2\left[\tilde\Tau\right]\right)\right]\\
\notag &=&\E_t\left[U^1\left(\tilde\Rho\left[\overline\Sig_{\phi_h(t)}^2\right],t,\overline\Sig_{\phi_h(t)}^2\left[\tilde\Rho\right]\right)\right]\leq Y_t^{1,2}+\eps.
\end{eqnarray}
Similarly, we can show that $X_t^1\leq Y_t^{1,3}+\eps$. Hence, $X_t^1\leq Y_t^1$.
\end{proof}

\begin{lemma}\label{l9}
For $i=1,2,3$, the processes $(Z_t^i)_{t\geq 0}$ and $(Y_t^i)_{t\geq 0}$ are right continuous.
\end{lemma}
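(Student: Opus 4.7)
The key observation is that the truncation $\phi_h(t)=(\lfloor t/h\rfloor+1)h$ defined in \eqref{e17} is piecewise constant and right-continuous: for every $t\in[0,\infty)$ there is $\delta>0$ such that $\phi_h(s)=\phi_h(t)$ for all $s\in[t,t+\delta)$. Consequently, the ``candidate equilibrium'' strategies $\overline\Tau^1_{\phi_h(t)},\overline\Sig^1_{\phi_h(t)},\overline\Rho^2_{\phi_h(t)},\ldots$ appearing inside the definitions of $Z^i_t$ and $Y^{i,j}_t$ are locally constant in $t$ from the right. Once they are frozen, the only remaining $t$-dependence in $Z^i_t,Y^{i,j}_t$ is the fixed-coordinate entry inside $U^i$ and the conditioning $\E_t[\cdot]$, so the problem collapses to the right-continuity of a single martingale.

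My plan is as follows. First, I would fix $t\in[0,\infty)$ and $t_n\searrow t$ and pick $N_0$ such that $\phi_h(t_n)=\phi_h(t)=:s$ for all $n\geq N_0$; for such $n$ the random variables $\overline\Tau^1_{\phi_h(t_n)}[\overline\Sig^1_{\phi_h(t_n)}]$ and $\overline\Sig^1_{\phi_h(t_n)}[\overline\Tau^1_{\phi_h(t_n)}]$ equal the corresponding objects at time $s$. Setting
$$\xi^1:=U^1\bigl(t,\overline\Tau^1_s[\overline\Sig^1_s],\overline\Sig^1_s[\overline\Tau^1_s]\bigr),$$
\asref{a1} gives $|Z^1_{t_n}-\E_{t_n}[\xi^1]|\leq\eta(|t_n-t|)\to 0$. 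Since $\xi^1$ is bounded (because $\eta$ is bounded, $U^1$ differs from a fixed random variable by at most $\sup\eta<\infty$), the process $r\mapsto\E_{t+r}[\xi^1]$ is a uniformly integrable martingale for the filtration $(\cF_{t+r})_{r\geq 0}$, which satisfies the usual conditions; hence by \cite[Theorem~3.13, page~16]{KS} it is right-continuous, so $\E_{t_n}[\xi^1]\to\E_t[\xi^1]=Z^1_t$. Combining the two bounds yields $Z^1_{t_n}\to Z^1_t$.

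The same argument, with only cosmetic relabelling, gives the right continuity of $Y^{1,2}$ and $Y^{1,3}$ (freezing $\overline\Rho^2_s,\overline\Sig^2_s$ and $\overline\Rho^3_s,\overline\Tau^3_s$ respectively, and using the fact that in $Y^{1,j}_t$ only the ``$t$-slot'' inside $U^1$ varies). Right continuity of $Y^1=Y^{1,2}\wedge Y^{1,3}+\eps$ then follows from the fact that the pointwise minimum of right-continuous processes is right-continuous and addition of a constant preserves right-continuity. The cases $i=2,3$ are obtained by permuting the roles of the three players.

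I do not expect a substantive obstacle. The only mildly delicate point is to recognise that $\phi_h$ is locally constant on the right so that the strategy objects really do not move for $n$ large, thereby reducing the question to the martingale right-continuity theorem; without this observation one might be tempted to attempt a saddle-point/$\eps$-equilibrium stability argument analogous to the proof of \leref{l4}, which would still work but would be unnecessarily complicated here.
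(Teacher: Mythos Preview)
Your proposal is correct and follows essentially the same approach as the paper: both arguments exploit the right-continuity (local constancy from the right) of $\phi_h$ to freeze the strategy objects, then use \asref{a1} to control the moving $t$-coordinate inside $U^i$ and invoke the right-continuity of the conditional-expectation martingale under the usual conditions to handle the conditioning. The only cosmetic difference is that you phrase the integrability hypothesis for the martingale as ``boundedness'' of $\xi^1$; strictly speaking $\xi^1$ is only bounded up to the random variable $U^1(0,\ldots,0)$, but integrability (which is implicit throughout the paper) is all that is needed for \cite[Theorem~3.13]{KS}.
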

\begin{proof}
Let $t\in[0,\infty)$. Let $t^n\searrow t$. Without loss of generality, we assume that $|t_n-t|<(1/n)\wedge(\phi_h(t)-t)$. Then
\begin{eqnarray}
\notag &&\hspace{-0.7cm} |Z_{t_n}^1-Z_t^1|\\
\notag &&\hspace{-0.7cm} =\left|\E_{t_n}\left[U^1\left(t_n,\overline\Tau_{\phi_h(t)}^1\left[\overline\Sig_{\phi_h(t)}^1\right],\overline\Sig_{\phi_h(t)}^1\left[\overline\Tau_{\phi_h(t)}^1 \right]\right)\right]-\E_t\left[U^1\left(t,\overline\Tau_{\phi_h(t)}^1\left[\overline\Sig_{\phi_h(t)}^1\right],\overline\Sig_{\phi_h(t)}^1\left[\overline\Tau_{\phi_h(t)}^1 \right]\right)\right]\right|\\
\notag &&\hspace{-0.7cm} \leq\left|\E_{t_n}\left[U^1\left(t,\overline\Tau_{\phi_h(t)}^1\left[\overline\Sig_{\phi_h(t)}^1\right],\overline\Sig_{\phi_h(t)}^1\left[\overline\Tau_{\phi_h(t)}^1 \right]\right)\right]-\E_t\left[U^1\left(t,\overline\Tau_{\phi_h(t)}^1\left[\overline\Sig_{\phi_h(t)}^1\right],\overline\Sig_{\phi_h(t)}^1\left[\overline\Tau_{\phi_h(t)}^1 \right]\right)\right]\right|+\eta(1/n).
\end{eqnarray}
Since
$$\left(\E_{t+s}\left[U^1\left(t,\overline\Tau_{\phi_h(t)}^1\left[\overline\Sig_{\phi_h(t)}^1\right],\overline\Sig_{\phi_h(t)}^1\left[\overline\Tau_{\phi_h(t)}^1 \right]\right)\right]\right)_{s\geq 0}$$
is a martingale w.r.t. the filtration $(\cF_{t+s})_{s\geq 0}$ satisfying the usual conditions, it is right continuous. Hence, $Z_{t_n}^1\rightarrow Z_t^1$. Similarly, we can show that $(Y_t^{1,2})_{t\geq 0}$ and $(Y_t^{1,3})_{t\geq 0}$ are right continuous.
\end{proof}

By Lemmas \ref{l5} and \ref{l9}, we have that for $i=1,2,3$, the process $(V_t^i)_{t\geq 0}$ is right continuous. Then we can choose $\delta>0$ being $\cF_\theta$-measurable, such that for $i=1,2,3$,
\begin{equation}
\label{e18}
\E_\theta\left[\sup_{0\leq r\leq\delta}\left|Z_{\mu^i+r}^i-Z_{\mu^i}^i\right|\right]<\eps\quad\text{and}\quad \E_\theta\left|V_{\mu^i+\delta}^i-V_{\mu^i}^i\right|<\eps.
\end{equation}
It can be shown that for any $\lambda\in\cT_\theta$, $\lambda+\delta\in\cT_{\theta+}$. 

By \prref{p1}, there exist $(\Rho^1, (\underline\Tau^1=(\underline\tau^1,\underline\tau_1^1,\underline\tau_3^1,\underline\tau_{13}^1),\underline\Sig^1=(\underline\sigma^1,\underline\sigma_1^1,\underline\sigma_2^1,\underline\sigma_{12}^1)))\in(\T_{\mu^1+\delta}^3)^3$ being an $\eps$-saddle point for the game
\begin{equation}\label{e19}
\tilde V_{\mu^1+\delta}:=\sup_{\Rho\in\T_{\mu^1+\delta}^3}\inf_{\Tau,\Sig\in\T_{\mu^1+\delta}^3}\E_{\mu^1+\delta}\left[U^1(\Rho[\Tau,\Sig],\Tau[\Rho,\Sig],\Sig[\Rho,\Tau])\right],
\end{equation}
and $(\Tau^2, (\underline\Rho^2=(\underline\rho^2,\underline\rho_2^2,\underline\rho_3^2,\underline\rho_{23}^2),\underline\Sig^2=(\underline\sigma^2,\underline\sigma_1^2,\underline\sigma_2^2,\underline\sigma_{12}^2)))\in(\T_{\mu^2+\delta}^3)^3$ being an $\eps$-saddle point for the game
$$\sup_{\Tau\in\T_{\mu^2+\delta}^3}\inf_{\Rho,\Sig\in\T_{\mu^2+\delta}^3}\E_{\mu^2+\delta}\left[U^2(\Rho[\Tau,\Sig],\Tau[\Rho,\Sig],\Sig[\Rho,\Tau])\right],$$
and 
$(\Sig^3, (\underline\Rho^3=(\underline\rho^3,\underline\rho_2^3,\underline\rho_3^3,\underline\rho_{23}^3),\underline\Tau^3=(\underline\tau^3\underline\tau_1^3,\underline\tau_3^3,\underline\tau_{13}^3)))\in(\T_{\mu^3+\delta}^3)^3$ being an $\eps$-saddle point for the game
$$\sup_{\Sig\in\T_{\mu^3+\delta}^3}\inf_{\Rho,\Tau\in\T_{\mu^3+\delta}^3}\E_{\mu^3+\delta}\left[U^3(\Rho[\Tau,\Sig],\Tau[\Rho,\Sig],\Sig[\Rho,\Tau])\right].$$

Now define $\hat\Rho=(\hat\rho,\hat\rho_2,\hat\rho_3,\hat\rho_{23}), \hat\Tau=(\hat\tau,\hat\tau_1,\hat\tau_3,\hat\tau_{13}), \hat\Sig=(\hat\sigma,\hat\sigma_1,\hat\sigma_2,\hat\sigma_{12})$ as follows.
$$\hat\rho:=
\begin{cases}
\mu^1,&\text{on}\ A,\\
\underline\rho^2,&\text{on}\ B,\\
\underline\rho^3,&\text{on}\ C,
\end{cases}
\quad\quad\quad\quad\quad
\hat\rho_2(t):=
\begin{cases}
\underline\rho_2^2(t),&\text{on}\ B\cap\{t\geq\mu^2+\delta\},\\
\underline\rho_2^3(t),&\text{on}\ C\cap\{t\geq\mu^3+\delta\},\\
\overline\rho_{\phi_h(t)}^2,&\text{otherwise},
\end{cases}
$$
$$
\hat\rho_3(t):=
\begin{cases}
\underline\rho_3^2(t),&\text{on}\ B\cap\{t\geq\mu^2+\delta\},\\
\underline\rho_3^3(t),&\text{on}\ C\cap\{t\geq\mu^3+\delta\},\\
\overline\rho_{\phi_h(t)}^3,&\text{otherwise},
\end{cases}
\quad
\hat\rho_{23}(s,t):=
\begin{cases}
\underline\rho_{23}^2(s,t),&\text{on}\ B\cap\{\mu^2+\delta\leq s\wedge t\},\\
\underline\rho_{23}^3(s,t),&\text{on}\ C\cap\{\mu^3+\delta\leq s\wedge t\},\\
\underline\rho_{\phi_h(t)}^{3,23},&\text{on}\ B\cap\{s=t=\mu^2\},\\
\underline\rho_{\phi_h(t)}^{2,23},&\text{on}\ C\cap\{s=t=\mu^3\},\\
\overline\rho_{\phi_h(s),3}^2(t),&\text{on}\ D\cap\{s\leq t\},\\
\overline\rho_{\phi_h(t),2}^3(s),&\text{on}\ D\cap\{s>t\},\\
\end{cases}
$$
and
$$\hat\tau:=
\begin{cases}
\mu^2,&\text{on}\ B,\\
\underline\tau^1,&\text{on}\ A,\\
\underline\tau^3,&\text{on}\ C,
\end{cases}
\quad\quad\quad\quad\quad
\hat\tau_1(t):=
\begin{cases}
\underline\tau_1^1(t),&\text{on}\ A\cap\{t\geq\mu^1+\delta\},\\
\underline\tau_1^3(t),&\text{on}\ C\cap\{t\geq\mu^3+\delta\},\\
\overline\tau_{\phi_h(t)}^1,&\text{otherwise},
\end{cases}
$$
$$
\hat\tau_3(t):=
\begin{cases}
\underline\tau_3^1(t),&\text{on}\ A\cap\{t\geq\mu^1+\delta\},\\
\underline\tau_3^3(t),&\text{on}\ C\cap\{t\geq\mu^3+\delta\},\\
\overline\tau_{\phi_h(t)}^3,&\text{otherwise},
\end{cases}
\quad
\hat\tau_{13}(s,t):=
\begin{cases}
\underline\tau_{13}^1(s,t),&\text{on}\ A\cap\{\mu^1+\delta\leq s\wedge t\},\\
\underline\tau_{13}^3(s,t),&\text{on}\ C\cap\{\mu^3+\delta\leq s\wedge t\},\\
\underline\tau_{\phi_h(t)}^{3,13},&\text{on}\ A\cap\{s=t=\mu^1\},\\
\underline\tau_{\phi_h(t)}^{1,13},&\text{on}\ C\cap\{s=t=\mu^3\},\\
\overline\tau_{\phi_h(s),3}^1(t),&\text{on}\ E\cap\{s\leq t\},\\
\overline\tau_{\phi_h(t),1}^3(s),&\text{on}\ E\cap\{s>t\},\\
\end{cases}
$$
and
$$\hat\sigma:=
\begin{cases}
\mu^3,&\text{on}\ C,\\
\underline\sigma^1,&\text{on}\ A,\\
\underline\sigma^2,&\text{on}\ B,
\end{cases}
\quad\quad\quad\quad\quad
\hat\sigma_1(t):=
\begin{cases}
\underline\sigma_1^1(t),&\text{on}\ A\cap\{t\geq\mu^1+\delta\},\\
\underline\sigma_1^2(t),&\text{on}\ B\cap\{t\geq\mu^2+\delta\},\\
\overline\sigma_{\phi_h(t)}^1,&\text{otherwise},
\end{cases}
$$
$$
\hat\sigma_2(t):=
\begin{cases}
\underline\sigma_2^1(t),&\text{on}\ A\cap\{t\geq\mu^1+\delta\},\\
\underline\sigma_2^2(t),&\text{on}\ B\cap\{t\geq\mu^2+\delta\},\\
\overline\sigma_{\phi_h(t)}^2,&\text{otherwise},
\end{cases}
\quad
\hat\sigma_{12}(s,t):=
\begin{cases}
\underline\sigma_{12}^1(s,t),&\text{on}\ A\cap\{\mu^1+\delta\leq s\wedge t\},\\
\underline\sigma_{12}^2(s,t),&\text{on}\ B\cap\{\mu^2+\delta\leq s\wedge t\},\\
\underline\sigma_{\phi_h(t)}^{2,12},&\text{on}\ A\cap\{s=t=\mu^1\},\\
\underline\sigma_{\phi_h(t)}^{1,12},&\text{on}\ B\cap\{s=t=\mu^2\},\\
\overline\sigma_{\phi_h(s),2}^1(t),&\text{on}\ F\cap\{s\leq t\},\\
\overline\sigma_{\phi_h(t),1}^2(s),&\text{on}\ F\cap\{s>t\},\\
\end{cases}
$$

where
$$A:=\{\mu^1\leq\mu^2,\mu^1\leq\mu^3\},\quad B:=\{\mu^2<\mu^1,\mu^2\leq\mu^3\},\quad C:=\{\mu^3<\mu^1,\mu^3<\mu^2\},$$
and $D$ (resp. $E,F$) is the complement of the first four cases in the definition of $\hat\rho_{23}$ (resp. $\hat\tau_{13},\hat\sigma_{12}$). It can be shown that $\hat\Rho,\hat\Tau,\hat\Sig\in\T_\theta^3$. The next result shows that $(\hat\Rho,\hat\Tau,\hat\Sig)$ is a $13\eps$-Nash equilibrium for the game \eqref{e1} when $N=3$. In particular, \thref{t1} holds for $N=3$.

\begin{proposition}
$(\hat\Rho,\hat\Tau,\hat\Sig)\in(\T_\theta^3)^3$ is a $13\eps$-Nash equilibrium for the game \eqref{e1} for $N=3$.
\end{proposition}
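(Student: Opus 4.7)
The plan is to verify player 1's $\eps$-Nash condition; the arguments for players 2 and 3 are analogous by the symmetric roles of the three players in the construction. Fix a deviation $\Rho=(\rho,\rho_2,\rho_3,\rho_{23})\in\T_\theta^3$ for player 1 and set $\lambda:=\hat\tau\wedge\hat\sigma$. Mirroring Parts 1 and 2 of the proof of \prref{p1}, I would decompose the equilibrium payoff
$$J^\star:=\E_\theta\left[U^1(\hat\Rho[\hat\Tau,\hat\Sig],\hat\Tau[\hat\Rho,\hat\Sig],\hat\Sig[\hat\Rho,\hat\Tau])\right]$$
and the deviation payoff
$$J(\Rho):=\E_\theta\left[U^1(\Rho[\hat\Tau,\hat\Sig],\hat\Tau[\Rho,\hat\Sig],\hat\Sig[\Rho,\hat\Tau])\right]$$
across case-by-case regions determined by the orderings of $\{\mu^1,\mu^2,\mu^3\}$ and of $\{\rho,\lambda\}$, expressing each in terms of the processes $X^1,Y^{1,2},Y^{1,3},Y^1,Z^1$ and the Dynkin value $V^1_\theta$.

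For $J^\star$ I would partition $\Omega$ on $A,B,C$. On $A$, since $\hat\rho=\mu^1<\mu^1+\delta\leq\hat\tau\wedge\hat\sigma$, player 1 leads at $\mu^1$, and the follower switches $\hat\tau_1(\mu^1),\hat\sigma_1(\mu^1)$ fall into the ``otherwise'' branches (because $\mu^1\not\geq\mu^1+\delta$), reducing to the 2-player $\eps$-Nash profile $(\overline\Tau^1_{\phi_h(\mu^1)},\overline\Sig^1_{\phi_h(\mu^1)})$; so the $A$-contribution equals $\E_\theta[Z^1_{\mu^1}1_A]+O(\eps)$ by \asref{a1} and \eqref{e18}. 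Analogously, on $B,C$ the leaders are players 2 and 3, and the contributions equal $\E_\theta[Y^{1,2}_{\mu^2}1_B]+O(\eps)$ and $\E_\theta[Y^{1,3}_{\mu^3}1_C]+O(\eps)$. Combining with the trivial inequality $X^1\leq Z^1$ and $Y^{1,j}\geq Y^1-\eps$ (from the definition of $Y^1$), and applying the Dynkin identity \eqref{e8} with the $\eps$-optimal $\mu^1$ against the obstacle $\mu^2\wedge\mu^3$, I would deduce the lower bound $J^\star\geq V^1_\theta-C_1\eps$.

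For $J(\Rho)$, I would split $\Omega$ on $\{\rho<\lambda\},\{\rho=\lambda\},\{\rho>\lambda\}$ intersected with $A,B,C$. On $\{\rho<\lambda\}$ player 1 leads at $\rho$, and the non-deviating responses fall into either the 2-player $\eps$-Nash regime (when $\rho<\mu^i+\delta$) or the 3-player $\eps$-saddle regime (when $\rho\geq\mu^i+\delta$); in each regime a direct comparison --- using \prref{p1} in the saddle regime and the definition of $\mu^1$ combined with the right-continuity of $V^1$ from \leref{l9} in the Nash regime --- yields a bound by $X^1_\rho+O(\eps)$ in conditional expectation. On $\{\rho\geq\lambda\}$ another player leads first; \leref{l1} applied to the relevant 2-player $\eps$-Nash profile bounds player 1's payoff by the equilibrium value $Y^{1,j}_\lambda+\eps$, coinciding with $J^\star$ on the matching sub-event up to $\eps$. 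Assembling these bounds yields
$$J(\Rho)\leq\E_\theta\!\left[X^1_\rho 1_{\{\rho<\lambda\}}+Y^1_\lambda 1_{\{\rho\geq\lambda\}}\right]+C_2\eps\leq V^1_\theta+C_3\eps,$$
the last step using the $\eps$-optimality of $\mu^1$ on the $\sup$-side of \eqref{e8}. Combined with the lower bound on $J^\star$, this gives $J(\Rho)-J^\star\leq 13\eps$.

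The main obstacle is the careful bookkeeping over the nine regions $\{A,B,C\}\times\{\rho<\lambda,\rho=\lambda,\rho>\lambda\}$, with further sub-cases depending on whether $\rho$ lies below or above $\mu^i+\delta$; on each region one must confirm that the piecewise definitions of $\hat\Tau,\hat\Sig$ collapse to exactly the $\eps$-Nash or $\eps$-saddle profile required to invoke \leref{l1}, \leref{l3}, \leref{l6}, or \prref{p1}. The time shift $\delta$ from \eqref{e18} plays the dual role of placing the saddle-point regime strictly inside $\cT_{\theta+}$ and of keeping the continuity loss between $\mu^i$ and $\mu^i+\delta$ absorbed into the $\eps$-budget.
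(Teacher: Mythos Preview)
Your lower bound $J^\star\geq V^1_\theta-C_1\eps$ is fine, but the upper bound $J(\Rho)\leq V^1_\theta+C_3\eps$ does not hold, and the argument you sketch for it breaks in two places. First, on $\{\rho<\lambda\}$ in the Nash regime the conditional payoff equals $Z^1_\rho$, not $X^1_\rho$, and since $Z^1_\rho\geq X^1_\rho$ there is no reason for the gap to be $O(\eps)$; in the saddle regime \prref{p1} only delivers the bound $\tilde V^1_{\mu^1+\delta}+\eps$, again not $X^1_\rho$. Second, your final step invokes ``$\eps$-optimality of $\mu^1$ on the $\sup$-side'' to bound the Dynkin expression from above --- this is backwards. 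An upper bound uniform in $\rho$ would require $\lambda=\hat\tau\wedge\hat\sigma$ to be an $\eps$-optimizer on the \emph{inf}-side of $V^1$, and on $B$ one has $\lambda=\mu^2$, a hitting time defined through player~2's obstacle $Z^2$, with no relation to $Y^1$. In fact the target inequality $J(\Rho)\leq V^1_\theta+C\eps$ can simply be false: on $B$ the deviation payoff is governed by $Y^{1,2}_{\mu^2}$, which may strictly exceed $Y^1_{\mu^2}=Y^{1,2}_{\mu^2}\wedge Y^{1,3}_{\mu^2}+\eps$ (and hence $V^1_{\mu^2}$), because at equilibrium players~2 and~3 are not colluding against player~1.

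The paper therefore never routes the upper bound through $V^1_\theta$. It compares $J(\Rho)$ to $J^\star$ directly, piece by piece: on $B$ and $C$ both the deviation payoff and the equilibrium payoff are controlled by the \emph{same} 2-player Nash profile $(\overline\Rho^j_{\phi_h(\mu^j)},\overline\Sig^j_{\phi_h(\mu^j)})$, so they match up to $\eps$ automatically. On $A\cap\{\rho\geq\mu^1+\delta\}$ the saddle bound gives the chain
\[
\tilde V^1_{\mu^1+\delta}+\eps\ \leq\ V^1_{\mu^1+\delta}+\eps\ \leq\ V^1_{\mu^1}+2\eps\ \leq\ Z^1_{\mu^1}+3\eps,
\]
the last inequality using precisely that $\mu^1$ is the hitting time of $\{V^1\leq Z^1+\eps\}$ (not of $\{V^1\leq X^1+\eps\}$), so one lands exactly on the $A$-component of $J^\star$. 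On $\{\rho<\mu^1\wedge\mu^2\wedge\mu^3\}$ one uses $Z^1_\rho< V^1_\rho$ together with the sub-martingale property of $V^1$ up to $\mu^1$ to push forward to $V^1_{\mu^1\wedge\mu^2\wedge\mu^3}$, and then bounds each $V^1_{\mu^i}$ by the corresponding component of $J^\star$.
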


\begin{proof}
First, we have that
\begin{eqnarray}
\notag \E_\theta\left[U^1(\hat\Rho[\hat\Tau,\hat\Sig],\hat\Tau[\hat\Rho,\hat\Sig],\hat\Sig[\hat\Rho,\hat\Tau])\right]&=&\E_\theta\left[\E_{\mu^1}\left[U^1\left(\mu^1,\overline\Tau_{\phi_h(\mu^1)}^1\left[\overline\Sig_{\phi_h(\mu^1)}^1\right],\overline\Sig_{\phi_h(\mu^1)}^1\left[\overline\Tau_{\phi_h(\mu^1)}^1\right]\right)\right]1_A\right]\\
\notag &+&\E_\theta\left[\E_{\mu^2}\left[U^1\left(\overline\Rho_{\phi_h(\mu^2)}^2\left[\overline\Sig_{\phi_h(\mu^2)}^2\right],\mu^2,\overline\Sig_{\phi_h(\mu^2)}^2\left[\overline\Rho_{\phi_h(\mu^2)}^2\right]\right)\right]1_B\right]\\
\notag &+&\E_\theta\left[\E_{\mu^3}\left[U^1\left(\overline\Rho_{\phi_h(\mu^3)}^3\left[\overline\Tau_{\phi_h(\mu^3)}^3\right],\overline\Tau_{\phi_h(\mu^3)}^3\left[\overline\Rho_{\phi_h(\mu^3)}^3\right],\mu^3\right)\right]1_C\right]\\
\notag &=&\E_\theta\left[Z_{\mu^1}^1 1_A+Y_{\mu^2}^{1,2} 1_B+Y_{\mu^3}^{1,3} 1_C\right].
\end{eqnarray}

Now take $\Rho=(\rho,\rho_2,\rho_3,\rho_{23})\in\T_\theta^3$. We will show that
\begin{equation}\label{e20}
\E_\theta\left[U^1(\Rho[\hat\Tau,\hat\Sig],\hat\Tau[\Rho,\hat\Sig],\hat\Sig[\Rho,\hat\Tau])\right]\leq\E_\theta\left[U^1(\hat\Rho[\hat\Tau,\hat\Sig],\hat\Tau[\hat\Rho,\hat\Sig],\hat\Sig[\hat\Rho,\hat\Tau])\right]+13\eps.
\end{equation}
We consider seven cases.

\textbf{Case 1}: $D_1:=\{\rho<\mu^1\wedge\mu^2\wedge\mu^3\}$. As $X_t^1\leq Z_t^1$ for any $t\geq 0$,
$$\mu^1\leq\inf\{t\geq\theta:\ V_t^1\leq X_t^1+\eps\}.$$
Therefore, the process $(V_t)_{t\geq 0}$ is a sub-martingale from $\theta$ to $\mu^1$. Hence,
\begin{eqnarray}
\notag \E_\theta\left[U^1(\Rho[\hat\Tau,\hat\Sig],\hat\Tau[\Rho,\hat\Sig],\hat\Sig[\Rho,\hat\Tau])1_{D_1}\right]&=&\E_\theta\left[\E_\rho\left[U^1\left(\rho,\overline\Tau_{\phi_h(\rho)}^1\left[\overline\Sig_{\phi_h(\rho)}^1\right],\overline\Sig_{\phi_h(\rho)}^1\left[\overline\Tau_{\phi_h(\rho)}^1\right]\right)\right]1_{D_1}\right]\\
\notag &=&\E_\theta\left[Z_\rho^1 1_{D_1}\right]\\
\notag &\leq&\E_\theta\left[V_\rho^1 1_{D_1}\right]\\
\notag &=&\E_\theta\left[\E_{\mu^1\wedge\mu^2\wedge\mu^3}\left[V_{\rho\wedge\mu^1\wedge\mu^2\wedge\mu^3}^1\right] 1_{D_1}\right]\\
\notag &\leq&\E_\theta\left[V_{\mu^1\wedge\mu^2\wedge\mu^3}^1 1_{D_1}\right]\\
\notag &=&\E_\theta\left[\left(V_{\mu^1}^1 1_A+V_{\mu^2}^1 1_B+V_{\mu^3}^1 1_C\right) 1_{D_1}\right]\\
\notag &\leq&\E_\theta\left[\left(Z_{\mu^1}^1 1_A+Y_{\mu^2}^{1,2} 1_B+Y_{\mu^3}^{1,3} 1_C\right) 1_{D_1}\right]+3\eps.
\end{eqnarray}

\textbf{Case 2}: $D_2:=\{\mu^1\wedge\mu^2\wedge\mu^3\leq\rho<\mu^1\wedge\mu^2\wedge\mu^3+\delta\}\cap A$. By \cite[Lemma 2.15, page 8]{KS}, $\{\mu^1\wedge\mu^2\wedge\mu^3\leq\rho\}\cap A\in\cF_\rho$, and thus $D_2\in\cF_\rho$. Then by \eqref{e18},
\begin{eqnarray}
\notag \E_\theta\left[U^1(\Rho[\hat\Tau,\hat\Sig],\hat\Tau[\Rho,\hat\Sig],\hat\Sig[\Rho,\hat\Tau])1_{D_2}\right]&=&\E_\theta\left[\E_\rho\left[U^1\left(\rho,\overline\Tau_{\phi_h(\rho)}^1\left[\overline\Sig_{\phi_h(\rho)}^1\right],\overline\Sig_{\phi_h(\rho)}^1\left[\overline\Tau_{\phi_h(\rho)}^1\right]\right)\right]1_{D_2}\right]\\
\notag &=&\E_\theta\left[Z_\rho^1 1_{D_2}\right]\\
\notag &\leq&\E_\theta\left[Z_{\mu^1}^1 1_{D_2}\right]+\eps\\
\notag &=&\E_\theta\left[\left(Z_{\mu^1}^1 1_A+Y_{\mu^2}^{1,2} 1_B+Y_{\mu^3}^{1,3} 1_C\right) 1_{D_2}\right]+\eps.
\end{eqnarray} 

\textbf{Case 3}: $D_3:=\{\rho\geq\mu^1\wedge\mu^2\wedge\mu^3+\delta\}\cap A$. Recall $\tilde V_{\mu^1+\delta}$ defined in \eqref{e19}. By \prref{p1},
$$\tilde V_{\mu^1+\delta}=\sup_{\rho\in\cT_{\mu^1+\delta}}\inf_{\lambda\in\cT_{\mu^1+\delta}}\E_{\mu+\delta}\left[X_\rho^1 1_{\{\rho\leq\lambda\}}+\tilde Y_\lambda^1 1_{\{\rho>\lambda\}}\right],$$
where
$$\tilde Y_t^1:=\left(\sup_{\Rho\in\T_t^2}\inf_{\Sig\in\T_t^2}\E_t\left[U^1(\Rho[\Sig],t,\Sig[\Rho])\right]\right)\wedge\left(\sup_{\Rho\in\T_t^2}\inf_{\Tau\in\T_t^2}\E_t\left[U^1(\Rho[\Tau],\Tau[\Rho],t)\right]\right).$$
We have that
$$\sup_{\Rho\in\T_t^2}\inf_{\Sig\in\T_t^2}\E_t\left[U^1(\Rho[\Sig],t,\Sig[\Rho])\right]\leq\sup_{\Rho\in\T_t^2}\E_t\left[U^1\left(\Rho\left[\overline\Sig_{\phi_h(t)}^2\right],t,\overline\Sig_{\phi_h(t)}^2\left[\Rho\right]\right)\right]\leq Y_t^{1,2}+\eps,$$
and similarly $\sup_{\Rho\in\T_t^2}\inf_{\Tau\in\T_t^2}\E_t\left[U^1(\Rho[\Tau],\Tau[\Rho],t)\right]\leq Y_t^{1,3}+\eps$. Therefore, for any $t\geq 0$, $\tilde Y_t^1\leq Y_t^1$, and thus
$$\tilde V_{\mu^1+\delta}^1\leq V_{\mu^1+\delta}^1.$$
Then by \eqref{e18},
\begin{eqnarray}
\notag \E_\theta\left[U^1(\Rho[\hat\Tau,\hat\Sig],\hat\Tau[\Rho,\hat\Sig],\hat\Sig[\Rho,\hat\Tau])1_{D_3}\right]&=&\E_\theta\left[\E_{\mu^1+\delta}\left[U^1\left(\Rho\left[\underline\Tau^1,\underline\Sig^1\right],\underline\Tau^1\left[\Rho,\underline\Sig^1\right],\underline\Sig^1\left[\Rho,\underline\Tau^1\right]\right)\right]1_{D_3}\right]\\
\notag &\leq&\E_\theta\left[\tilde V_{\mu^1+\delta}^1 1_{D_3}\right]+\eps\\
\notag &\leq&\E_\theta\left[V_{\mu^1+\delta}^1 1_{D_3}\right]+\eps\\
\notag &\leq&\E_\theta\left[V_{\mu^1}^1 1_{D_3}\right]+2\eps\\
\notag &\leq&\E_\theta\left[Z_{\mu^1}^1 1_{D_3}\right]+3\eps\\
\notag &=&\E_\theta\left[\left(Z_{\mu^1}^1 1_A+Y_{\mu^2}^{1,2} 1_B+Y_{\mu^3}^{1,3} 1_C\right) 1_{D_3}\right]+3\eps.
\end{eqnarray}

\textbf{Case 4}: $D_4:=\{\rho=\mu^1\wedge\mu^2\wedge\mu^3\}\cap B$. It can be shown that
$$Y_t^{1,2}\geq\inf_{\sigma\in\cT_t}\E_t\left[U^1(t,t,\sigma)\right]-\eps\geq\E_t\left[U^1\left(t,t,\underline\sigma_{\phi_h(t)}^{1,12}\right)\right]-2\eps.$$
Therefore,
\begin{eqnarray}
\notag \E_\theta\left[U^1(\Rho[\hat\Tau,\hat\Sig],\hat\Tau[\Rho,\hat\Sig],\hat\Sig[\Rho,\hat\Tau])1_{D_4}\right]&=&\E_\theta\left[\E_{\mu^2}\left[U^1\left(\mu^2,\mu^2,\underline\sigma_{\phi_h(\mu^2)}^{1,12}\right)\right]1_{D_4}\right]\\
\notag &\leq&\E_\theta\left[Y_{\mu^2}^{1,2} 1_{D_4}\right]+2\eps\\
\notag &=&\E_\theta\left[\left(Z_{\mu^1}^1 1_A+Y_{\mu^2}^{1,2} 1_B+Y_{\mu^3}^{1,3} 1_C\right) 1_{D_4}\right]+2\eps.
\end{eqnarray}

\textbf{Case 5}: $D_5:=\{\rho>\mu^1\wedge\mu^2\wedge\mu^3\}\cap B$. Let $\Rho_{\mu^2}:=(\rho_2(\mu^2),\rho_{23}(\mu^2,\cdot))\in\T_{\mu^2}^2$.
\begin{eqnarray}
\notag \E_\theta\left[U^1(\Rho[\hat\Tau,\hat\Sig],\hat\Tau[\Rho,\hat\Sig],\hat\Sig[\Rho,\hat\Tau])1_{D_5}\right]&=&\E_\theta\left[\E_{\mu^2}\left[U^1\left(\Rho_{\mu^2}\left[\overline\Sig_{\phi_h(\mu^2)}^2\right],\mu^2,\overline\Sig_{\phi_h(\mu^2)}^2\left[\Rho_{\mu^2}\right]\right)\right]1_{D_5}\right]\\
\notag &\leq&\E_\theta\left[Y_{\mu^2}^{1,2} 1_{D_5}\right]+\eps\\
\notag &=&\E_\theta\left[\left(Z_{\mu^1}^1 1_A+Y_{\mu^2}^{1,2} 1_B+Y_{\mu^3}^{1,3} 1_C\right) 1_{D_5}\right]+\eps.
\end{eqnarray}

\textbf{Case 6}: $D_6:=\{\rho=\mu^1\wedge\mu^2\wedge\mu^3\}\cap C$. Similar to case 4, we can show that
$$\E_\theta\left[U^1(\Rho[\hat\Tau,\hat\Sig],\hat\Tau[\Rho,\hat\Sig],\hat\Sig[\Rho,\hat\Tau])1_{D_6}\right]\leq\E_\theta\left[\left(Z_{\mu^1}^1 1_A+Y_{\mu^2}^{1,2} 1_B+Y_{\mu^3}^{1,3} 1_C\right) 1_{D_6}\right]+2\eps.$$

\textbf{Case 7}: $D_7:=\{\rho>\mu^1\wedge\mu^2\wedge\mu^3\}\cap C$. Similar to case 5, we can show that
$$\E_\theta\left[U^1(\Rho[\hat\Tau,\hat\Sig],\hat\Tau[\Rho,\hat\Sig],\hat\Sig[\Rho,\hat\Tau])1_{D_7}\right]\leq\E_\theta\left[\left(Z_{\mu^1}^1 1_A+Y_{\mu^2}^{1,2} 1_B+Y_{\mu^3}^{1,3} 1_C\right) 1_{D_7}\right]+\eps.$$
From cases 1-7, we have \eqref{e20} holds. Similarly, we can show that for any $\Tau,\Sig\in\T_\theta^3$,
$$\E_\theta\left[U^2(\hat\Rho[\Tau,\hat\Sig],\Tau[\hat\Rho,\hat\Sig],\hat\Sig[\hat\Rho,\Tau])\right]\leq\E_\theta\left[U^2(\hat\Rho[\hat\Tau,\hat\Sig],\hat\Tau[\hat\Rho,\hat\Sig],\hat\Sig[\hat\Rho,\hat\Tau])\right]+13\eps,$$
and
$$\E_\theta\left[U^3(\hat\Rho[\hat\Tau,\Sig],\hat\Tau[\hat\Rho,\Sig],\Sig[\hat\Rho,\hat\Tau])\right]\leq\E_\theta\left[U^3(\hat\Rho[\hat\Tau,\hat\Sig],\hat\Tau[\hat\Rho,\hat\Sig],\hat\Sig[\hat\Rho,\hat\Tau])\right]+13\eps.$$ 
\end{proof}

\bibliographystyle{siam}
\bibliography{ref}

\end{document}